\documentclass[a4paper,french,twoside,12pt]{smfartVS}

\usepackage{smfenum,smfthm,amsmath,amssymb,amscd}
\usepackage{mathrsfs,euscript,upgreek,color}
\usepackage[latin1]{inputenc}
\input{xypic}

\textheight 21cm
\textwidth 16cm
\setlength{\oddsidemargin}{0cm}
\setlength{\evensidemargin}{0cm}
\setlength{\topmargin}{1cm}

\numberwithin{equation}{section} 
\bibliographystyle{smfplain}
\theoremstyle{plain}

%%%%%%%%%%%%%%%%%%%%%%%%%%%%%%%%%%%%%%%%%%%%%%%%%%%%%%%%%%%%%%%%%%%%%%%%

\def\CC{\mathbf{C}}
\def\FF{\mathbf{F}}
\def\NN{\mathbf{N}}
\def\QQ{\mathbf{Q}}

\def\ZZ{\mathbf{Z}} 

%%%%%%%%%%%%%%%%%%%%%%%%%%%%%%%%%%%%%%%%%%%%%%%%%%%%%%%%%%%%%%%%%%%%%%%%%%%

\def\St{{\rm St}}

\def\scusp{{\rm scusp}}

\def\seg{{\rm Seg}}

\def\A{{\rm A}}
\def\B{{\rm B}}

\def\D{{\rm D}}
\def\E{{\rm E}}
\def\F{{\rm F}}
\def\G{{\rm G}}

\def\I{{\rm I}}

\def\K{{\rm K}}
\def\L{{\rm L}}
\def\M{{\rm M}}
\def\N{{\rm N}}
\def\P{{\rm P}}

\def\R{{\rm R}}

\def\U{{\rm U}}
\def\V{{\rm V}}

\def\X{{\rm X}}

\def\Z{{\rm Z}}

%%%%%%%%%%%%%%%%%%%%%%%%%%%%%%%%%%%%%%%%%%%%%%%%%%%%%%%%%%%%%%%%%%%%%%%%%%%

\def\Gg{\mathscr{G}}

\def\Jj{\mathscr{J}}

\def\Oo{\mathscr{O}}

%%%%%%%%%%%%%%%%%%%%%%%%%%%%%%%%%%%%%%%%%%%%%%%%%%%%%%%%%%%%%%%%%%%%%%%%%%%

%%%%%%%%%%%%%%%%%%%%%%%%%%%%%%%%%%%%%%%%%%%%%%%%%%%%%%%%%%%%%%%%%%%%%%%%%%%

\def\a{\alpha} 
\def\b{\beta}

\def\p{\mathfrak{p}}
\def\s{\sigma}

%%%%%%%%%%%%%%%%%%%%%%%%%%%%%%%%%%%%%%%%%%%%%%%%%%%%%%%%%%%%%%%%%%%%%%%%%%%

\def\ie{c'est-à-dire }

\def\rp{\rangle}
\def\>{\geqslant}
\def\<{\leqslant}

%%%%%%%%%%%%%%%%%%%%%%%%%%%%%%%%%%%%%%%%%%%%%%%%%%%%%%%%%%%%%%%%%%%%%%%%%%%

\def\Aut{{\rm Aut}}
\def\Mat{\mathscr{M}}
\def\GL{{\rm GL}}

\def\mult#1{{#1}^{\times}}

%%%%%%%%%%%%%%%%%%%%%%%%%%%%%%%%%%%%%%%%%%%%%%%%%%%%%%%%%%%%%%%%%%%%%%%%%%%

%%%%%%%%%%%%%%%%%%%%%%%%%%%%%%%%%%%%%%%%%%%%%%%%%%%%%%%%%%%%%%%%%%%%%%%%%%%

\def\Oo{\EuScript{O}}

\def\CR{{\rm R}}

\def\supp{{\rm supp}}
\def\cusp{{\rm cusp}}
\def\Irr{{\rm Irr}}

\def\ll{\mathfrak{l}}
\def\mm{\mathfrak{m}}

\def\sss{\mathfrak{s}}

\def\r{{\bf r}}
\def\ip{\boldsymbol{i}}

\def\rp{\boldsymbol{r}}

\def\Cusp{\EuScript{C}}
\def\Scusp{\EuScript{S}}
\def\SCusp{\Scusp}

\def\sy#1{\boldsymbol{[}#1\boldsymbol{]}}

\def\cC#1{{\rm supp}(#1)}
\def\cCO#1{{\rm supp}^0(#1)}

\def\iso#1{\smash{\mathop{\longrightarrow}\limits^{#1}}}
\def\widetild#1{#1^{\vee}}

\def\({\left(}
\def\){\right)}

\def\Div{\ZZ}
\def\Dive{\NN}

\def\m{\mm}

\def\St{{\rm St}}

\def\seg{{\rm Seg}}

\def\qlb{\overline{\QQ}_{\ell}}
\def\zlb{\overline{\ZZ}_{\ell}}
\def\flb{\overline{\FF}_{\ell}}

\def\vr{\varrho}

\def\MS{{\rm Mult}}

\def\1{{\bf 1}}

\newcounter{nonum}

%%%%%%%%%%%%%%%%%%%%%%%%%%%%%%%%%%%%%%%%%%%%%%%%%%%%%%%%%%%%%%%%%%%%%%%%%%%%%%%

\author{Alberto M\'\i nguez}
\address{Institut de Math\'ematiques de Jussieu, 
Universit\'e Pierre et Marie Curie,
4, place Jussieu. 75005 Paris, France.} 
\urladdr{http://www.math.jussieu.fr/$\sim$minguez/}
\email{minguez@math.jussieu.fr}

\author{Vincent Sécherre}
\address{Université de Versailles Saint-Quentin-en-Yvelines\\
Laboratoire de Mathématiques de Versailles\\
45 avenue des Etats-Unis\\
78035 Versailles cedex, France}
\email{vincent.secherre@math.uvsq.fr}

%%%%%%%%%%%%%%%%%%%%%%%%%%%%%%%%%%%%%%%%%%%%%%%%%%%%%%%%%%%%%%%%%%%%%%%%%%%

\title[Représentations banales de $\GL_{m}(\D)$]
{Représentations banales de $\GL_{m}(\D)$}

% \begin{abstract}
% Soit $\F$ un corps commutatif localement compact non archimédien de 
% ca\-rac\-téristique résiduelle $p$, soit $\D$ une $\F$-algèbre à division 
% centrale de dimension finie et soit $\CR$ un corps algébriquement clos 
% de caractéristique différente de $p$.  
% Nous définissons la notion de $\CR$-représentation irréductible banale de 
% $\G=\GL_{m}(\D)$, notion qui généralise celle de caractéristique banale pour 
% $\G$, et qui repose sur une condition portant sur le support cus\-pidal de 
% la représentation et dépendant de la caractéristique de $\CR$. 
% Lorsque le corps $\CR$ est de caractéristique banale, en particulier lorsque $\CR$ 
% est le corps des nombres complexes, toute $\CR$-représentation 
% irréductible de $\G$ est banale.  
% Nous donnons dans cet article 
% une clas\-si\-fi\-ca\-tion des $\CR$-représentations irréductibles banales 
% de $\G$ en termes de multisegments dits banals.  
% Lorsque $\CR$ est le corps des nombres complexes, notre méthode 
% fournit une nouvelle preuve, entièrement locale, de la classification 
% de Tadi\'c des représentations lisses ir\-ré\-duc\-ti\-bles complexes de $\G$.  
% %\cite{Tadic} et \cite{BHLS}. 
% \end{abstract}

\begin{altabstract}
Let $\F$ be a non-Archimedean locally compact field of residue 
characteristic $p$, let $\D$ be a finite dimensional central 
division $\F$-algebra and let $\CR$ be an algebraically closed field 
of characteristic different from $p$.  
We define \textit{banal} irreducible $\CR$-repre\-sentation of 
$\G=\GL_{m}(\D)$. 
This notion 
involves a condition on the cuspidal support of the repre\-sentation 
depending on the characteristic of $\CR$. 
When this characteristic is banal with respect to $\G$, 
in particular when $\CR$ is 
the field of complex numbers, any irreducible $\CR$-representation of $\G$ 
is banal. 
In this article, we give a classification of all banal ir\-re\-du\-ci\-ble 
$\CR$-representations of $\G$ in terms of certain multisegments, called 
banal. 
When $\CR$ is the field of complex numbers, our method provides a 
new proof, entirely local, of Tadi\'c's classification of irreducible complex 
smooth representations of $\G$. 
\end{altabstract}

\thanks{Ce travail a bénéficié de financements de l'EPSRC 
(GR/T21714/01, EP/G001480/1) et l'Agence Natio\-nale de la Recherche 
(ANR-08-BLAN-0259-01, ANR-10-BLANC-0114).
Le pre\-mier auteur est financé en partie par MTM2010-19298 
et FEDER}

%%%%%%%%%%%%%%%%%%%%%%%%%%%%%%%%%%%%%%%%%%%%%%%%%%%%%%%%%%%%%%%%%%%%%%%%%%%

\linespread{1.1}

%%%%%%%%%%%%%%%%%%%%%%%%%%%%%%%%%%%%%%%%%%%%%%%%%%%%%%%%%%%%%%%%%%%%%%%%%%%

\begin{document}

\maketitle

\section{Introduction}

Soit $\F$ un corps commutatif localement compact non archimédien de 
ca\-rac\-téristique ré\-si\-duelle $p$ et soit $\D$ une $\F$-algèbre à division 
centrale de dimension finie dont le degré réduit est noté $d$.  
Pour tout entier $m\>1$, on dé\-si\-gne par 
$\G_{m}$ le groupe $\GL_{m}(\D)$, qui est une forme intérieure du groupe 
linéaire $\GL_{md}(\F)$. 
Les représentations lisses irréductibles complexes du groupe 
$\GL_{md}(\F)$ ont été 
classées par Zelevinski \cite{Ze2} en  termes de paramètres appelés
\textit{multisegments}.
Dans \cite{Tadic}, où la caractéristique de $\F$ est supposée nulle, 
Tadi\'c donne une classification des représentations lisses 
irréductibles complexes de $\G_m$ en termes de multi\-segments.
La méthode utilisée par Tadi\'c s'appuie sur les résultats 
de \cite{DKV} (qui reposent eux-mêmes sur la formule des traces simple) 
et en particulier sur la correspondance de Jacquet-Langlands et la 
classification des représentations tempérées en fonction de la série 
discrète (voir {\it ibid.}, théorème B.2.d). 
Dans \cite{Bad}, Badulescu étend ces deux résultats au cas où 
$\F$ est de caractéristique $p$, et on trouve dans \cite{BHLS} la 
classification des représentations lis\-ses irréductibles 
complexes de $\G_m$ sans restriction sur la caractéristique de $\F$. 

\medskip

Dans cet article, on s'intéresse au problème de la classification des 
représentations lisses irréductibles de $\G_m$ à coefficients dans un corps 
$\CR$ algébriquement clos de caractéristique $\ell$ différente de $p$, 
appelées aussi représentations (lisses irréductibles) modulaires
dans le cas où $\ell$ est non nulle. 
Dans le cas modulaire, 
on ne dispose pas d'une formule des traces et le théo\-rè\-me du
quotient de Langlands, qui permet dans la construction de Tadi\'c d'obtenir 
toutes les représentations irréductibles en fonction des représentations 
tempérées, n'est pas valable. 
Il faut trouver une approche différente de celle employée dans le cas 
complexe.

\medskip

Dans \cite{MS}, nous avons effectué la classification complète des
$\CR$-représentations irré\-duc\-ti\-bles de $\G_m$ en 
termes de multisegments.  
C'est un long travail dont l'un des principaux outils est 
la théorie des types, et qui s'appuie sur des résultats pro\-fonds de 
\cite{ArikiBook,CG} sur les représentations des algèbres de Hecke 
affines de type $\A$ en une raci\-ne de l'unité. 

\medskip

Dans cet article, nous proposons une classification, 
\textit{indépendante de celle obtenue dans \cite{MS}
et ne s'appuyant pas sur \cite{ArikiBook,CG}}, 
de certaines $\CR$-représentations irréductibles de 
$\G_m$ que nous appelons \textit{banales}. 
Une représentation irréductible de $\G_m$ est banale 
si son support cuspidal satisfait à une condition technique 
qui dépend de la carac\-téristique de $\CR$ 
(voir plus bas pour une définition précise).  
Si cette caractéristique $\ell$ est banale, \ie si $\ell$ est différent de $p$ 
et ne divise aucun des entiers $q^{di}-1$ pour $i\in\{1,\dots,m\}$, où $q$ 
désigne le cardinal du corps résiduel de $\F$, alors toute 
$\CR$-représentation irréductible de $\G_m$ est banale.  
Ceci se produit en particulier si $\CR$ est le corps des nombres 
complexes~: dans ce cas, notre article fournit une nouvelle preuve de la
clas\-si\-fi\-cation de Tadi\'c, 
indépendante de la caractéristique de $\F$ et purement locale~: 
on n'uti\-lise ni la formule des traces, ni la correspondance de 
Jacquet-Langlands.  

\medskip

Nous avons choisi de publier la classification des représentations 
banales indé\-pen\-dam\-ment de la classification complète de \cite{MS}, 
d'une part parce que l'approche employée
dans \cite{MS} se simplifie de façon remarquable dans le cas banal 
(en particulier nous n'avons pas besoin ici des résultats profonds de 
\cite{ArikiBook,CG}), 
d'autre part pour insister sur le fait que notre approche fournit une preuve 
purement locale de la classification de Tadi\'c. 

\medskip

L'une des principales difficultés auxquelles on est confronté lorsqu'on étudie 
les re\-pré\-sen\-ta\-tions mo\-du\-lai\-res de $\G_m$ est l'apparition de 
représentations cuspidales non supercuspidales (voir \cite{Vig1}).  
C'est ce qu'on évite ici en se restreignant aux représentations banales~: 
une représentation ir\-ré\-duc\-ti\-ble banale est cuspidale si et seulement 
si elle est supercuspi\-dale.  
Aussi certaines techniques algébriques de la théorie des représentations 
complexes em\-ployées par Zelevinski s'étendent-elles au cas banal. 
On obtient une double classification (à la Zelevinski et à la Langlands)
plus précise que celle de \cite{MS} (qui ne donne qu'une classification à la 
Zelevinski) et la preuve en est beaucoup plus simple.

\medskip

Il est temps déjà de donner plus de détails sur le contenu de cet article.
Etant donnée une représentation irréductible cuspidale $\rho$ de $\G_m$, 
on lui associe
dans \cite[\textsection 7.1]{MS} un carac\-tère non ramifié $\nu_{\rho}$ tel que, 
pour toute représentation cuspidale $\rho'$ de $\G_{m'}$, $m'\>1$, 
l'in\-duite parabolique normalisée $\rho\times\rho'$ soit réductible si et seulement si 
$\rho'$ est isomorphe à $\rho\nu_\rho^{}$ ou à $\rho\nu_{\rho}^{-1}$.  
Par exemple, si $\D$ est égale à $\F$, le caractère $\nu_{\rho}$ est indépendant de $\rho$ 
et est égal à $|\det |_{\F}$, où $|\ |_{\F}$ désigne la valeur absolue normalisée de
$\F$.  
On note $\ZZ_\rho$ l'ensemble des classes d'isomorphisme des
$\rho\nu_{\rho}^{i}$ pour $i\in\ZZ$.  
Un {\it segment} est une suite finie de la forme~:
\begin{equation*}
\(\rho\nu_{\rho}^a,\rho\nu_{\rho}^{a+1},\dots,\rho\nu_{\rho}^{b}\)
\end{equation*}
où $a,b$ sont des entiers tels que $a\<b$.  
Un tel segment est noté $\left[a,b\right]_\rho$.  
On a une notion naturelle d'équivalence entre segments, et on définit un 
{\it multisegment} comme une somme formelle de clas\-ses d'équi\-va\-len\-ce de 
segments, \ie un élément du groupe abélien libre engendré par les classes 
d'équi\-va\-len\-ce de segments.  
Le {\it support} d'un segment $\left[a,b\right]_\rho$ est la somme 
formelle des classes d'isomorphisme des $\rho\nu_{\rho}^i$ pour $a\<i\<b$, 
somme qui ne dépend que de la classe de ce segment, et le support 
d'un multisegment est la somme des supports des segments qui le 
composent.  

\begin{defi}
\begin{enumerate}
\item 
Une somme formelle $\rho_1+\dots+\rho_r$ 
de classes de représenta\-tions irréductibles cuspidales est dite
\textit{banale} si, pour toute représentation irréductible 
cuspi\-da\-le $\rho$, il existe 
un élément de $\ZZ_\rho$ qui n'apparaît pas dans cette somme. 
\item
Un multisegment est dit \textit{banal} si son support est banal~; 
une représentation irré\-duc\-tible est dite \textit{banale} si son support 
cuspidal est banal.  
\end{enumerate}
\end{defi}

La propriété suivante, prouvée dans \cite[\textsection 8.2]{MS}, justifie l'importance des 
représentations banales. 

\begin{prop}
Soient $\rho_1,\dots,\rho_r$ des représentations cuspidales, avec $r\>2$.  
Supposons que la somme formelle $\rho_1+\dots+\rho_r$ soit banale.  
Alors l'induite parabolique~: 
\begin{equation*}
\rho_1\times\dots\times\rho_r
\end{equation*}
ne contient aucun sous-quotient irréductible cuspidal.
\end{prop}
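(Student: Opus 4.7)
The plan is to argue by contradiction, combining the banality hypothesis with the uniqueness of the supercuspidal support of an irreducible representation. Suppose $\pi$ is an irreducible cuspidal subquotient of $\rho_{1}\times\dots\times\rho_{r}$; I aim to derive a contradiction with $r\>2$.

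First I would observe that, since the formal sum $\rho_{1}+\dots+\rho_{r}$ is banal, each individual $\rho_{i}$ is itself a banal cuspidal representation; by the fact recalled in the introduction — every banal cuspidal irreducible representation is in fact supercuspidal — each $\rho_{i}$ is then supercuspidal. Next, invoking the uniqueness up to permutation of the supercuspidal support of an irreducible $\CR$-representation of $\G_{m}$ (due to Vign\'eras, and transposed to the inner-form setting in \cite{MS}), the supercuspidal support of $\pi$ must coincide with the multiset $\{\rho_{1},\dots,\rho_{r}\}$. In particular, the cuspidal support of $\pi$ equals the banal formal sum $\rho_{1}+\dots+\rho_{r}$, so $\pi$ is itself a banal representation; being also cuspidal, the same implication ``banal cuspidal $\Rightarrow$ supercuspidal'' forces $\pi$ to be supercuspidal, and hence its supercuspidal support reduces to $\{\pi\}$. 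Comparing the two descriptions of this support yields $\{\pi\}=\{\rho_{1},\dots,\rho_{r}\}$ as multisets, whence $r=1$, contradicting $r\>2$.

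The hard part will be ensuring the two inputs used above: uniqueness of the supercuspidal support on the one hand, and the implication ``banal cuspidal $\Rightarrow$ supercuspidal'' on the other. Neither is proved in the excerpt; both are established in \cite{MS}. In the modular setting the first is by far the more delicate of the two, since the very existence of cuspidal non-supercuspidal representations makes the definition of a canonical support non-obvious; once these two facts are granted, however, the proposition reduces to the purely combinatorial bookkeeping sketched above, and in particular requires no direct analysis of Jacquet modules, of the geometric lemma, or of the reducibility points of the induced representation.
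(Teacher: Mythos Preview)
There is a genuine gap at the central step. You correctly deduce that each $\rho_i$ is supercuspidal (being a banal cuspidal) and that any cuspidal subquotient $\pi$ of $\rho_1\times\dots\times\rho_r$ has \emph{supercuspidal} support $\rho_1+\dots+\rho_r$. But the sentence ``In particular, the cuspidal support of $\pi$ equals the banal formal sum $\rho_1+\dots+\rho_r$'' is incorrect: since $\pi$ is assumed cuspidal, its cuspidal support is simply $\sy{\pi}$ (recall from \S\ref{SuppCusp} that the cuspidal support is defined through \emph{sub}representations, while the supercuspidal support is defined through subquotients). In the modular setting these two supports may differ---indeed, their coincidence for banal representations (Corollary~\ref{egalitedesupports}) is deduced \emph{from} the very proposition you are trying to prove. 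Consequently you have no grounds for asserting that $\pi$ is banal, and the implication ``banal cuspidal $\Rightarrow$ supercuspidal'' from \cite[Remarque~7.16]{MS} cannot be applied to $\pi$.

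What your argument actually needs is the statement: \emph{a cuspidal representation with banal supercuspidal support is supercuspidal}. Modulo the easy reduction via \cite[Remarque~7.16]{MS}, this is equivalent to the proposition itself restricted to supercuspidal inducing data, so taking it as an input is circular. A non-circular route must invoke some independent structural fact---for instance, the description of cuspidal non-supercuspidal representations obtained in \cite{MS}, from which one sees that any such representation has supercuspidal support containing an entire orbit $\ZZ_\rho$ and hence cannot be banal. Note finally that the present paper does not supply its own proof of the proposition but refers to \cite[\S8.2]{MS}; regardless of what is done there, the purely formal bookkeeping you sketch, using only uniqueness of the supercuspidal support together with \cite[Remarque~7.16]{MS}, does not suffice.
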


On définit dans \cite[\textsection 8.3]{MS} le support supercuspidal d'une représentation 
irréductible de $\G_m$. 
On montre 
que le support supercuspidal d'une représentation banale est égal 
à son support cuspidal.  

\medskip

Nous pouvons maintenant énoncer les théorèmes de classification 
contenus dans cet article.  
Soit $\Delta=\left[a,b\right]_\rho$ un segment banal.  
Alors la représentation induite~: 
\begin{equation*}
\rho\nu_\rho^a\times\dots\times\rho\nu_\rho^b
\end{equation*}
possède une unique sous-re\-pré\-sen\-ta\-tion ir\-ré\-ductible, 
notée $\Z(\Delta)$, et un unique quotient ir\-ré\-ductible, noté 
$\L(\Delta)$ (voir la proposition \ref{segm}).  
Le résultat principal de cet article est le double théorème de classification 
suivant.  

\begin{theo}
\label{Z21}
\begin{enumerate}
\item 
Soient $\Delta_1,\dots,\Delta_r$ des segments tels que, 
pour tous $i<j$, le segment $\Delta_i$ ne 
précède pas $\Delta_j$
et tels que le multisegment $\m=\Delta_1+\dots+\Delta_r$ 
soit banal. 
Alors~: 
\begin{equation}
\label{ZDi}
\Z(\Delta_1)\times \dots \times \Z(\Delta_r)
\end{equation}
admet une unique sous-représentation irréductible, ne dépendant 
que de $\m$ et notée $\Z(\m)$.
Elle est banale et sa mul\-ti\-pli\-cité comme sous-quotient de (\ref{ZDi})
est égale à $1$. 
\item 
L'application $\m\mapsto\Z(\m)$ définit une bijection entre multisegments 
banals de degré $n$ et classes d'isomorphisme de représentations irréductibles 
banales de $\G_n$.
\end{enumerate}
\end{theo}

\begin{theo}
\label{Z22}
\begin{enumerate}
\item 
Soient $\Delta_1,\dots,\Delta_r$ des segments tels que, 
pour tous $i<j$, le segment $\Delta_i$ ne 
précède pas $\Delta_j$
et tels que le multisegment $\m=\Delta_1+\dots+\Delta_r$ 
soit banal. 
Alors~: 
\begin{equation}
\label{LDi}
\L(\Delta_1)\times \dots \times \L(\Delta_r)
\end{equation}
admet un unique quotient irréductible, ne dépendant 
que de $\m$ et noté $\L(\m)$.
Il est banal et sa mul\-ti\-pli\-cité comme facteur de (\ref{LDi})
est égale à $1$. 
\item 
L'application $\m\mapsto\L(\m)$ définit une bijection entre multisegments 
banals de degré $n$ et classes d'isomorphisme de représentations irréductibles 
banales de $\G_n$.
\end{enumerate}
\end{theo}

La première classification est dite \textit{à la Zelevinski} et la seconde 
\textit{à la Langlands}.  
Pour prouver les parties (1) des théorèmes et l'injectivité des
classifications, on
adapte les idées du théorème du quotient de Langlands au cas
modulaire.  
Remarquons que la condition de banalité nous 
permet d'ordonner tout multisegment banal de sorte que pour tous 
$i<j$, le segment $\Delta_i$ ne précède pas $\Delta_j$. 
Pour prouver la surjectivité des classifications, on procède comme dans 
\cite{Ze2} ~: on utilise le fait que 
$\Z(\Delta)\times \Z(\Delta' )$ et $ \L(\Delta)\times \L(\Delta')$ 
sont de longueur $2$ ou $1$, selon que les segments $\Delta$ et $\Delta'$ 
sont liés ou non 
(ce qui est faux si on n'impose pas la condition de banalité).  

\medskip

Supposons maintenant que $\CR$ soit le corps des nombres complexes.  
Dans le cas où $\D$ est égale à $\F$, notre preuve, purement combinatoire, 
est plus simple que celle de Zelevinski \cite{Ze2}, qui utilise à plusieurs 
reprises des arguments de \cite{BZ2}.  
Notre preuve ne fait presque aucune diffé\-rence entre les deux classifications.  
Dans le le cas où $\D$ n'est pas commutative, le théorème \ref{Z22} est prouvé dans 
\cite{Tadic} pour $\F$ de caractéristique nulle et dans \cite{BHLS} 
pour $\F$ de carac\-téristique $p$. 
Le théorème \ref{Z21} est nouveau.  

\medskip

Pour finir, dans la section \ref{rele}, on montre que toute
$\flb$-représentation irréductible banale se relève en une 
$\qlb$-représentation irréductible entière.  
En général, une $\flb$-re\-pré\-sentation non banale 
ne peut pas toujours être relevée (voir \cite[\textsection 9.7]{MS}). 

\section*{Remerciements}

La majeure partie de ce travail a été effectuée alors que le second 
auteur était en poste à l'Université de la Méditerranée et à 
l'Institut de Mathématiques de Luminy.
Le début de  ce travail faisait partie  de la thèse du premier  auteur sous la
direction de G. Henniart. Les auteurs le remercient pour son support constant 
et ses nombreuses suggestions. 

\section*{Notations et conventions}

{\bf 1.}
\textit{Dans tout cet article}, $\F$ est un corps commutatif localement compact non 
archimé\-dien, de carac\-téristique résiduelle notée $p$, 
et $\CR$ est un corps algébriquement clos de carac\-téristique différente de $p$. 

\medskip

{\bf 2.}
Une $\F$-\emph{algèbre à division} est une $\F$-algèbre 
centrale de dimension finie dont l'anneau sous-jacent est un corps qui 
n'est pas né\-ces\-sai\-re\-ment commutatif. 
Si $\K$ est une extension finie de $\F$, ou plus généralement 
une algèbre à division sur une extension finie de $\F$, 
on note $\Oo_\K$ son anneau d'entiers, $\p_\K$ son idéal maximal 
et $q_\K$ le cardinal de son corps résiduel. 
On pose enfin $q=q_\F$. 

\medskip

{\bf 3.}
Une $\CR$-{\it re\-pré\-sen\-ta\-tion lisse} d'un groupe topologique 
$\G$ est un couple composé d'un $\R$-espace vectoriel $\V$ et d'un 
homomorphisme de groupes de $\G$ dans $\Aut_\R(\V)$ tel que le stabilisateur 
de tout vecteur de $\V$ soit un sous-groupe ouvert de $\G$. 
{\it Dans cet article, toutes les repré\-sentations sont supposées lisses.}

Un $\CR$-\textit{caractère} de $\G$ est un homomorphisme 
de groupes de $\G$ dans $\mult\CR$ de noyau ouvert.  
Si $\pi$ est une $\CR$-représentation de $\G$, on désigne par $\pi^{\vee}$ 
sa contragrédiente. 
Si en outre $\chi$ est un $\CR$-caractère de $\G$, on note $\chi\pi$ ou 
$\pi\chi$ la représentation tordue $g\mapsto\chi(g)\pi(g)$.

S'il n'y a pas d'ambiguïté, on écrira \textit{caractère} et 
\textit{re\-présentation} plutôt que $\CR$-caractère et 
$\CR$-repré\-sen\-ta\-tion. 

\medskip

{\bf 4.}
Étant donné un ensemble $\X$, on note $\Div(\X)$ le groupe 
abélien libre de base $\X$ constitué des applications de $\X$ dans $\ZZ$ 
à support fini et $\Dive(\X)$ le sous-ensemble de $\Div(\X)$ 
constitué des applications à valeurs dans $\NN$. 
Pour $f,g\in\Div(\X)$, on note $f\<g$ si $g-f\in\Dive(\X)$, 
ce qui définit une relation d'ordre partiel sur $\Div(\X)$.

\section{Préliminaires}
\label{Banalite}

Dans tout ce qui suit, on fixe une $\F$-algèbre à division $\D$ 
de degré réduit noté $d$.
Pour tout entier $m\>1$, on dé\-si\-gne par $\Mat_{m}(\D)$ la 
$\F$-algèbre des matrices de taille $m\times m$ à coefficients dans $\D$ 
 et on pose $\G_{m}=\GL_{m}(\D)$.  
Il est commode de convenir que $\G_{0}$ est le groupe trivial. 

\subsection{}
\label{DefNu}

Soit $m\>1$, et soit $\N_{m}$ la norme réduite de $\Mat_{m}(\D)$ sur $\F$.  
On note $|\ |_{\F}$ 
la valeur absolue normalisée de $\F$, \ie la valeur absolue 
don\-nant à une uniformisante de $\F$ la valeur $q^{-1}$.
Puisque l'image de $q$ dans $\CR$ est in\-ver\-si\-ble, 
cette valeur absolue définit un $\CR$-caractère de $\mult\F$ noté $|\ |_{\F,\CR}$.
L'ap\-pli\-ca\-tion $g\mapsto|\N_{m}(g)|_{\F,\CR}$ est un 
$\CR$-ca\-rac\-tè\-re de $\G_{m}$, qu'on notera simplement $\nu$.  

\subsection{}
\label{UZero}

Pour $m\>0$, on note $\Irr_{\CR}(\G_m)$ l'ensemble des classes 
d'isomorphisme de 
$\CR$-re\-pré\-sen\-ta\-tions ir\-ré\-duc\-ti\-bles de $\G_m$ 
et $\Gg_{\CR}(\G_m)$ le groupe de Gro\-then\-dieck de ses 
$\CR$-re\-pré\-sen\-ta\-tions de longueur finie, 
qui est un $\ZZ$-module libre de base $\Irr_{\CR}(\G_m)$. 

Si $\pi$ est une $\CR$-représentation de longueur finie de $\G_m$, 
on note $\deg(\pi)=m$, qu'on ap\-pel\-le le {\it degré} de $\pi$, et 
on note $\sy{\pi}$ son image dans $\Gg_{\CR}(\G_m)$. 
En particulier, si $\pi$ est irréductible, $\sy{\pi}$ désigne 
sa classe d'isomorphisme. 
Lorsqu'aucune confusion ne sera possible, il nous arrivera d'identifier 
une $\CR$-représentation avec sa classe d'isomorphisme.

On désigne par $\Irr_{\CR}$ la réunion disjointe des ensembles 
$\Irr_{\CR}(\G_{m})$ pour $m\>0$, et par $\Gg_{\CR}$ la somme 
directe des $\Gg_{\CR}(\G_m)$ pour $m\>0$, qui est un $\ZZ$-module 
libre de base $\Irr_{\CR}$. 

\subsection{}
\label{GeEm}

Si $\a=(m_{1},\ldots,m_{r})$ est une famille 
d'entiers positifs ou nuls dont la somme est égale à $m$, 
il lui correspond le 
sous-groupe de Levi standard $\M_{\a}$ de $\G_{m}$ constitué des matrices 
diagonales par blocs de tailles $m_{1},\ldots,m_{r}$ respectivement, que 
l'on identifie naturellement au produit 
$\G_{m_{1}}\times\cdots\times\G_{m_{r}}$. 
On note $\P_{\a}$ 
le sous-groupe para\-bo\-li\-que de $\G_{m}$ de facteur de Levi 
$\M_{\a}$ formé des matrices tri\-an\-gu\-lai\-res supérieures 
par blocs de tailles $m_{1},\ldots,m_{r}$ respectivement, et on note 
$\U_{\a}$ 
son radical unipotent.

\textit{On choisit une fois pour toutes une racine carrée de $q$
dans $\CR$.}
On note $\rp_\a$ le foncteur de restriction 
parabolique nor\-ma\-li\-sé relativement à ce choix, 
et $\ip_\a$ son adjoint à droite, \ie le foncteur d'induction 
para\-bo\-li\-que nor\-ma\-li\-sé lui correspondant.
Ces foncteurs sont exacts,
et préservent l'admissibilité et le fait d'être de longueur finie.

Si, pour chaque $i\in\{1,\ldots,r\}$, on a une 
$\CR$-représentation $\pi_{i}$ de $\G_{m_i}$, on note~: 
\begin{equation}
\label{VentreDieu}
\pi_1\times\cdots\times\pi_r=\ip_{\a}(\pi_1\otimes\cdots\otimes\pi_r).
\end{equation}
Si les $\pi_{i}$ sont de longueur finie, 
la quantité $\sy{\pi_1\times\cdots\times\pi_r}$
ne dépend que de $\sy{\pi_{1}},\dots,\sy{\pi_{r}}$.
L'ap\-pli\-cation~:
\begin{equation}
\label{VentreDieuGris}
(\sy{\pi_1},\dots,\sy{\pi_r})\mapsto\sy{\pi_1\times\cdots\times\pi_r}
\end{equation}
induit par linéarité une application linéaire 
de $\Gg_{\CR}(\G_{m_1})\times\dots\times\Gg_{\CR}(\G_{m_r})$ dans 
$\Gg_{\CR}(\G_{m})$. 
Ceci munit $\Gg_{\CR}$ d'une structure de 
$\ZZ$-al\-gè\-bre commutative gra\-duée 
(voir \cite{Ze2,Tadic,BHLS} dans le cas complexe
et \cite{Dat3,MS} dans le cas modulaire).

On note également ${\rp}_{\a}^{-}$ le foncteur de restriction 
para\-bo\-li\-que relativement au sous-groupe parabolique opposé 
à $\P_{\a}$ relativement à $\M_\a$, 
\ie formé des matrices tri\-an\-gu\-lai\-res inférieures
par blocs de tailles $m_{1},\ldots,m_{r}$ respectivement, 
et on note $\a^{-}=(m_{r},\ldots,m_{1})$ la famille déduite 
de $\a$ en in\-ver\-sant l'ordre des termes. 

\subsection{}
\label{DefRepCusp}

Une $\CR$-représentation irréductible de $\G_m$ est dite {\it cus\-pi\-da\-le} 
si son image par $\rp_\a$ est nulle pour toute famille
$\a=(m_{1},\ldots,m_{r})$ d'entiers compris entre $0$ et $m-1$ et de somme 
$m$.
Elle est dite {\it super\-cus\-pi\-da\-le} si en outre, pour 
toute $\CR$-représentation irréductible $\pi_{i}$ 
de $\G_{m_i}$, sa classe d'isomorphisme 
n'apparaît pas dans $\sy{\pi_1\times\cdots\times\pi_r}$

On note $\Cusp_{\CR}(\G_m)$ et $\SCusp_{\CR}(\G_m)$ les sous-ensembles de 
$\Irr_{\CR}(\G_m)$ constitués respectivement des clas\-ses 
d'iso\-mor\-phis\-me 
de $\CR$-re\-pré\-sen\-ta\-tions ir\-ré\-duc\-ti\-bles cuspidales et 
supercuspidales de $\G_m$.
On note $\Cusp_{\CR}$ et $\SCusp_{\CR}$ la réunion disjointe de
$\Cusp_{\CR}(\G_{m})$ et $\SCusp_{\CR}(\G_m)$ respectivement, pour $m\>0$.

\subsection{}
\label{SuppCusp}

Une \textit{paire (super)cuspidale} de $\G_m$ est un couple $(\M,\vr)$ formé 
d'un sous-groupe de Levi $\M$ de $\G_m$ et d'une $\CR$-représentation
irréductible (super)cuspidale $\vr$ de $\M$.
Si $\M=\M_\a$ pour une famille $\a=(m_{1},\ldots,m_{r})$ d'entiers positifs 
de somme $m$, alors $\vr$ est de la forme $\rho_1\otimes\dots\otimes\rho_r$, 
où $\rho_i$ est une $\CR$-représentation irréductible (super)cuspidale 
de $\G_{m_i}$, pour $i\in\{1,\dots,r\}$. 

Si $\pi$ est une $\CR$-re\-pré\-sen\-ta\-tion irréductible de $\G_m$, il 
existe une famille $\a=(m_{1},\ldots,m_{r})$ et, pour chaque $i$, une 
$\CR$-représentation irréductible 
cuspidale (supercuspidale) 
$\rho_i$ de $\G_{m_i}$ telles que $\pi$ soit isomorphe à une 
sous-représentation (un sous-quotient)
de la représen\-tation induite $\rho_1\times\dots\times\rho_r$.  
La somme~: 
\begin{equation*}
\sy{\rho_1}+\dots+\sy{\rho_r}
\end{equation*}
dans $\Dive(\Cusp_{\CR})$ 
(resp. $\Dive(\SCusp_{\CR})$) est unique et s'appelle 
le sup\-port cuspidal (supercuspidal) de $\pi$ et est notée $\cusp(\pi)$ 
(resp. $\scusp(\pi)$).  
Pour une preuve de l'unicité du support cuspidal (supercuspidal), 
on renvoie à \cite{MS}. 

\subsection{}
\label{lemmegeo}

Dans ce paragraphe, on donne une version combinatoire du lemme géométrique
de Bernstein-Zelevinski (voir \cite[\textsection 2.4.3]{MS}).  
Soient $\a=(m_1,\dots,m_r)$ et $\b=(n_1,\dots,n_s)$ deux familles d'entiers 
de sommes toutes deux égales à $m\>1$.
Pour chaque $i\in\{1,\ldots,r\}$, soit $\pi_i$ une $\CR$-re\-pré\-sen\-tation 
ir\-ré\-duc\-ti\-ble de $\G_{m_i}$, et posons 
$\pi=\pi_1\otimes\cdots\otimes\pi_r\in\Irr_{\CR}(\M_{\a})$. 
On note $\Mat^{\a,\b}$ l'ensemble des ma\-tri\-ces $\B=(b_{i,j})$ composées
d'entiers positifs tels que~:
\begin{equation*}
\label{2} 
\sum_{j=1}^s b_{i,j}=m_i, \quad \sum_{i=1}^rb_{i,j}=n_j,
\quad i\in\{1,\ldots,r\}, \quad j\in\{1,\ldots,s\}.
\end{equation*}
Fixons $\B\in\Mat^{\a,\b}$ et notons $\a_i=(b_{i,1},\dots,b_{i,s})$ et 
$\b_j=(b_{1,j},\dots,b_{r,j})$, qui sont des partitions de $m_i$ et 
de $n_j$ respectivement. 
Pour chaque $i\in\{1,\ldots,r\}$, on écrit~:
\begin{equation*}
\s^{(k)}_{i}=\s^{(k)}_{i,1}\otimes\cdots\otimes\s^{(k)}_{i,s}, 
\quad
\s^{(k)}_{i,j}\in\Irr_{\CR}(\G_{b_{i,j}}),
\quad k\in\{1,\ldots,r_i\},
\end{equation*}
les différents facteurs de composition de $\rp_{\a_i}(\pi_i)$.
Pour tout $j\in\{1,\ldots,s\}$ et toute famille d'entiers
$(k_1,\ldots,k_r)$ tels que $1\<k_i\<r_i$, 
on définit une $\CR$-représentation $\sigma_{j}$ de $\G_{n_j}$ par~: 
\begin{equation*}
\s_{j}=\ip_{\b_j}
\(\s^{(k_1)}_{1,j}\otimes\cdots\otimes\s^{(k_r)}_{r,j}\). 
\end{equation*}
Alors les $\CR$-représentations~: 
\begin{equation*}
\label{Druon}
\s_{1}\otimes\cdots\otimes\s_{s}, 
\quad
\B\in\Mat^{\a,\b},
\quad
1\<k_i\<r_i,
\end{equation*}
forment une suite de com\-po\-si\-tion de
$\rp_{\b}(\ip_{\a}(\pi))$. 

\medskip

On reprend les notations ci-dessus avec $s=2$.
Le résultat suivant est prouvé dans \cite[Proposition 2.8]{MS}.  

\begin{prop}
\label{mmm1}
On suppose que, pour tout $i\in\{1,\ldots,r\}$, toute partition 
$\a_i$ de $m_i$ et tout facteur de composition 
$\s^{(k)}_{i}=\s^{(k)}_{i,1}\otimes\s^{(k)}_{i,2}$ de 
$\rp_{\a_i}(\pi_i)$, 
on ait~:
\begin{equation*}
\cusp(\s^{(k)}_{i,2})\nleqslant\sum\limits_{i<j\<r}\cusp(\pi_j).
\end{equation*}
Alors $\pi$ apparaît avec multiplicité $1$ dans 
$\sy{\rp_{\a}(\ip_{\a}(\pi))}$. 
Ainsi $\ip_{\a}(\pi)$ (resp.  $\ip_{\a^-}(\pi)$) a une unique 
sous-représentation (resp.  un unique quotient) 
irréductible, dont la multiplicité dans 
$\sy{\ip_{\a}(\pi)}$ (resp. $\sy{\ip_{\a^-}(\pi)}$) est égale à $1$. 
\end{prop}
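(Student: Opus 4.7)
The plan is to separate two claims: first, that $\pi$ appears with multiplicity exactly $1$ as a composition factor of $\rp_\a(\ip_\a(\pi))$; second, that this multiplicity statement implies the uniqueness of the irreducible subrepresentation of $\ip_\a(\pi)$ and, dually, of the irreducible quotient of $\ip_{\a^-}(\pi)$, each appearing with multiplicity $1$ in the whole induced representation.

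For the first claim, I would apply the combinatorial geometric lemma recalled just above the proposition, specialized to $\b=\a$. It produces a composition series for $\rp_\a(\ip_\a(\pi))$ whose terms are indexed by matrices $\B=(b_{i,j})\in\Mat^{\a,\a}$ together with, for each $\B$, a choice of composition factor of each $\rp_{\a_i}(\pi_i)$, where $\a_i=(b_{i,1},\ldots,b_{i,r})$. The diagonal matrix $\B^0$ with $b^0_{i,i}=m_i$ and $b^0_{i,j}=0$ for $i\neq j$ forces each $\a_i$ to be trivial and yields a single term equal to $\pi$; so $\pi$ occurs at least once.

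The crux is to show that no non-diagonal $\B$ contributes $\pi$. A direct inspection of row and column sums shows that any non-diagonal $\B$ has at least one entry $b_{i_0,j_0}>0$ with $i_0>j_0$. Choosing such a pair $(i_0,j_0)$ according to a suitable minimality on $i_0$, I would apply the hypothesis to the two-part coarsening of $\a_{i_0}$ separating the "left" block of columns (those of index $\leqslant j_0$, containing the chosen entry) from the "right" block. By transitivity of the Jacquet functor, the refined composition factor $\s^{(k_{i_0})}_{i_0}$ refines a composition factor $\tau_1\otimes\tau_2$ of the two-part restriction $\rp_{(a,b)}(\pi_{i_0})$, with $\cusp(\tau_2)$ equal to the sum of the cuspidal supports of the $\s^{(k_{i_0})}_{i_0,j}$ indexing the right block. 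The hypothesis forbids $\cusp(\tau_2)$ from being contained in $\sum_{j>i_0}\cusp(\pi_j)$. On the other hand, the equalities $\cusp(\s_j)=\cusp(\pi_j)$ imposed by the assumed isomorphism $\s_1\otimes\cdots\otimes\s_r\simeq\pi$, combined with the column-wise cuspidal-support identities produced by the geometric lemma and with the vanishing of certain entries of $\B$ forced by the minimality of $i_0$, would impose precisely that containment. This combinatorial bookkeeping of cuspidal supports along rows versus columns of $\B$, and the reduction of the $r$-column geometric lemma to the two-part hypothesis of the statement, is the main obstacle.

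Finally, the deduction of the socle and cosocle uniqueness is standard. If $\tau$ is an irreducible subrepresentation of $\ip_\a(\pi)$ occurring there with multiplicity $n$, Frobenius reciprocity yields a nonzero map $\rp_\a(\tau)\to\pi$, so $\pi$ appears at least $n$ times in the Jordan--H\"older series of $\rp_\a(\ip_\a(\pi))$. Combined with the multiplicity-one statement, this forces $n=1$ and simultaneously excludes two non-isomorphic irreducible subrepresentations; hence $\ip_\a(\pi)$ has a unique irreducible subrepresentation, appearing with multiplicity $1$. An entirely symmetric argument, applying the geometric lemma to the opposite parabolic and invoking the second adjunction in place of Frobenius reciprocity, yields the unique irreducible quotient of $\ip_{\a^-}(\pi)$ with multiplicity one.
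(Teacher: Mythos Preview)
The paper does not actually prove this proposition; it merely cites \cite[Proposition 2.8]{MS}. So there is no in-paper argument to compare against, and your plan --- apply the geometric lemma with $\b=\a$, isolate the diagonal matrix, and rule out the others by a cuspidal-support count --- is exactly the right strategy and is presumably what \cite{MS} does. Your deduction of the uniqueness of the socle (and, dually, of the cosocle) from the multiplicity-one statement via Frobenius reciprocity and second adjunction is also correct.

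There is, however, a concrete error in the combinatorial step. You pick a \emph{sub}-diagonal entry $b_{i_0,j_0}>0$ with $i_0>j_0$ and split the row $i_0$ at column $j_0$. This cannot work: already for $r=2$, $m_1=m_2=2$ and $\B=\begin{pmatrix}1&1\\1&1\end{pmatrix}$, the only sub-diagonal entry forces $i_0=2=r$, and the hypothesis at $i=r$ reads $\cusp(\tau_2)\not\<0$, which is vacuous. No minimality on $i_0$ can repair this, because the hypothesis at the last index carries no information.

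The fix is simply to reverse the direction. Since $\a=\b$, a non-diagonal $\B$ with equal row and column sums also has a \emph{super}-diagonal entry: there is $i_0$ with $\sum_{j>i_0}b_{i_0,j}>0$. Split row $i_0$ at column $i_0$ (not at $j_0$): the two-part coarsening is $(\sum_{j\<i_0}b_{i_0,j},\,\sum_{j>i_0}b_{i_0,j})$, with second part positive. Then
\[
\cusp(\tau_2)=\sum_{j>i_0}\cusp\big(\s^{(k_{i_0})}_{i_0,j}\big)
\ \<\ \sum_{j>i_0}\sum_{i=1}^{r}\cusp\big(\s^{(k_i)}_{i,j}\big)
=\sum_{j>i_0}\cusp(\s_j)=\sum_{j>i_0}\cusp(\pi_j),
\]
the last equality coming from the assumed isomorphism $\s_1\otimes\cdots\otimes\s_r\simeq\pi$. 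This contradicts the hypothesis at $i=i_0$, and no extremality condition on $i_0$ is needed.
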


\section{La théorie des segments}
\label{segments}

On rappelle les notions de segment et de multisegment (voir 
\cite[\textsection\textsection 7.3 et 9.1]{MS}).  

\subsection{}
\label{ccusp}

Soit $m\>1$ un entier, et soit 
$\rho$ une $\CR$-représentation irréductible cuspidale de $\G_m$.  
Dans \cite[\textsection 7.1]{MS}, on associe à $\rho$ 
un $\CR$-caractère non ramifié $\nu_{\rho}$ de $\G_m$ tel que, 
si $\rho'$ est une $\CR$-re\-pré\-sentation irréductible cuspidale de 
$\G_{m'}$ avec $m'\>1$, 
la représentation in\-duite 
$\rho\times\rho'$ soit réductible si et seulement si $m'=m$ et si 
$\rho'$ est isomorphe à 
$\rho\nu_{\rho}^{}$ ou à $\rho\nu_{\rho}^{-1}$.
Ce caractère ne dépend que de la classe d'inertie de $\rho$.  
On pose~:
\begin{equation}
\ZZ_\rho=\{\sy{\rho\nu_{\rho}^{i}}\ |\ i\in\ZZ\}.
\end{equation}
Dans le cas où $\CR$ est de caractéristique non nulle, cet ensemble est fini 
et on note $e(\rho)$ son cardinal.  
Si la caractéristique de $\CR$ est nulle, 
$\ZZ_\rho$ est un ensemble infini et on convient que $e(\rho)=+\infty$.

\subsection{}

Soit un entier $m\>1$, soit $\rho$ une $\CR$-re\-pré\-sen\-ta\-tion 
irréductible cuspidale de $\G_m$ et soient $a,b\in\ZZ$ des entiers 
tels que $a\<b$. 

\begin{defi}
Un {\it segment} est une suite finie de la forme~:
\begin{equation}
\(\rho\nu_{\rho}^a,\rho\nu_{\rho}^{a+1},\dots,\rho\nu_{\rho}^{b}\).
\end{equation}
Une telle famille est notée $\left[a,b \right]_\rho$. 
\end{defi}

Si $\Delta=\left[a,b\right]_{\rho}$ est un segment, on note~: 
\begin{equation}
n(\Delta)=b-a+1,
\quad 
\deg(\Delta)=(b-a+1)m,
\quad 
a(\Delta)=\rho\nu_{\rho}^a,
\quad
b(\Delta)=\rho\nu_{\rho}^b, 
\end{equation}
respectivement la longueur, le degré et les extrémités de $\Delta$.
Si $a+1\<b$, on pose~:
\begin{equation}
^-\Delta=\left[a+1,b\right]_\rho,
\quad
\Delta^-=\left[a,b-1\right]_\rho.
\end{equation}
Le {\it support} de $\Delta$, noté $\supp(\Delta)$, 
est l'élément de $\Dive(\Cusp_\CR)$ défini par~: 
\begin{equation}
\supp(\Delta)=\sy{\rho\nu_\rho^a}+\dots+\sy{\rho\nu_\rho^b}. 
\end{equation}
On note enfin~:
\begin{equation}
\label{ContraSeg}
\widetild{\Delta}=\left[-b,-a \right]_{\widetild{\rho}}
\end{equation}
le segment contragrédient de $\Delta$.

\label{lies}

\begin{defi}
Soient $\Delta=\left[a,b\right]_\rho$ et $\Delta'=\left[a',b'\right]_{\rho'}$
des segments. 
\begin{enumerate}
\item 
On dit que $\Delta$ {\it précède} $\Delta'$ si l'on peut extraire 
de la suite~:
\begin{equation*}
(\rho\nu_{\rho}^a ,\dots,\rho\nu_{\rho}^{b}, 
\rho'\nu_{\rho'}^{a'},\dots,\rho'\nu_{\rho'}^{b'})
\end{equation*}
une sous-suite qui soit un segment de longueur strictement 
supérieure à $n(\Delta)$ et $n(\Delta')$.
\item
On dit que $\Delta$ et $\Delta'$ sont \textit{liés} si $\Delta$ 
précède $\Delta'$ ou si $\Delta'$ précède $\Delta$. 
\end{enumerate}
\end{defi}

Remarquons que, si $\Delta$ et $\Delta'$ sont liés, les ensembles 
$\ZZ_\rho$ et $\ZZ_{\rho'}$ sont égaux. 

\subsection{}
\label{multisegment}

Deux segments $\left[a,b\right]_{\rho}$ et $\left[a',b'\right]_{\rho'}$ 
sont dits \textit{équivalents} s'ils ont la même longueur et si 
$\rho\nu_{\rho}^a$ est isomorphe à $\rho'\nu_{\rho'}^{a'}$. 
On note $\seg_{\CR}$ l'ensemble des classes d'équivalence de seg\-ments.  

\begin{defi}
Un \textit{multisegment} est un élément de $\NN(\seg_{\CR})$.
On note~:
\begin{equation*}
\MS=\MS_\CR
\end{equation*}
l'ensemble des multisegments. 
\end{defi}

Soit $\m= \Delta_1 + \dots + \Delta_r$ un multisegment.  
Les applications longueur, degré et support sont 
prolongées à $\MS$ par additivité, \ie qu'on note~: 
\begin{equation*}
n(\m)=\underset{1 \< i \< r}{\sum}n\( \Delta_i \),
\quad
\deg\(\m \)= \underset{1 \< i \< r}{\sum}\deg\( \Delta_i \),
\quad 
\supp(\m)=\underset{1 \< i \< r}{\sum}\supp\( \Delta_i \),
\end{equation*}
la longueur, le degré et le support cuspidal de $\m$ repectivement.  
\'Etant donné un support cuspidal $\sss\in\Dive\(\Cusp\)$, on note~: 
\begin{equation*}
\MS(\sss)
\end{equation*}
l'ensemble de tous les multisegments de support $\sss$.  

On note 
$\cCO{\m}$ l'en\-sem\-ble des élé\-ments de $\Cusp_\CR$ apparaissant dans 
$\supp(\m)$ avec multiplicité $\>1$.  
On dit que le support de $\m$ est \textit{connexe} s'il existe un segment $\Delta$
tel que $\cCO{\m}=\cCO{\Delta}$. 
Dans ce cas on dit que $\m$ est à support connexe. 

\section{Les représentations banales}
\label{sectionbanal}

Dans cette section, on définit la notion de représentation banale et on donne 
quelques premières propriétés et exemples de telles représentations.

\subsection{}
\label{ordon}

On commence par définir la notion de multisegment banal.  

\begin{defi}
Un multisegment $\m$ est dit \textit{banal} si, pour toute représentation 
irré\-duc\-tible cuspidale $\rho$, il existe un élément de $\ZZ_\rho$ 
n'apparaissant pas dans $\supp(\m)$. 
\end{defi}

Un segment $\left[a,b\right]_\rho$ est banal si et seulement s'il est 
de longueur strictement inférieure à $e(\rho)$.

\begin{rema}
Notons que la somme de deux multisegments banals n'est pas toujours banale.  
Par exemple, supposons que l'image de $q$ dans $\mult\CR$ soit d'ordre $2$.
On note $1$ le caractère trivial de $\GL_1(\F)$ et $\nu$ son unique 
caractère non ramifié d'ordre $2$.
Alors les segments $\sy{1}$ et $\sy{\nu}$ 
sont banals tandis que leur somme ne l'est pas.
\end{rema}

Si $\m$ est un multisegment banal,
il existe des segments $\Delta_1,\dots,\Delta_r$ tels que, 
pour $i<j$, le segment $\Delta_i$ ne précède pas $\Delta_j$
et tels que $\m=\Delta_1+\dots+\Delta_r$.
Une telle famille $(\Delta_1,\dots,\Delta_r)$ s'appelle une 
\textit{forme rangée} de $\m$. 
En général, un mutisegment non banal ne possède pas de 
forme rangée. 

\subsection{}
\label{ConnBana}

Soit $m\>1$ un entier et soit $\rho$ une représentation irréductible 
cuspidale de $\G_m$ telle que $e(\rho)\>2$.  
On fixe un entier $0\<t\<e(\rho)-1$. 

\begin{defi}
Soient $\Delta=\left[a,b \right]_\rho$ et $\Delta=\left[a',b'\right]_{\rho}$ 
deux segments vérifiant la condi\-tion $0\<a,b,a',b'\<t$.  
On écrit~: 
\begin{equation*}
\Delta>\Delta'
\end{equation*}
si $a > a'$, ou bien si $a=a'$ et $b>b'$.
\end{defi}

On écrit aussi $\Delta\>\Delta'$ si 
$\Delta > \Delta'$ ou $\Delta=\Delta'$. 
La relation $\>$ est une relation d'ordre dépendant des choix de $\rho$ et 
$t$. 

\subsection{}
\label{ConnBanaMult}

Soit $\sss\in\Dive(\Cusp)$ un support \textit{connexe et banal}.  
On va prolonger l'ordre sur les segments en un ordre total sur $\MS(\sss)$.  
D'abord, il existe une représentation irréductible cuspidale $\rho$ 
(unique à isomorphisme près) et un unique entier $0\<t\<e(\rho)-1$ tels que~: 
\begin{equation*}
\supp(\sss)=\sy{\rho }+\sy{\rho\nu_\rho^{}}+\dots+\sy{\rho\nu_\rho^t}. 
\end{equation*}
Comme $\sss$ est banal, chaque terme de cette somme apparaît avec multiplicité 
$1$, \ie qu'on a 
$\cCO{\sss}=\left\{\sy{\rho},\sy{\rho\nu_\rho^{}},\dots,\sy{\rho\nu_\rho^t}\right\}$. 

On appelle \textit{forme ordonnée} d'un multisegment $\m\in \MS(\sss)$ 
l'unique famille~:
\begin{equation*}
(\Delta_1,\dots,\Delta_r)
\end{equation*}
de segments tels que 
$\m=\Delta_1+\dots+\Delta_r$ et $\Delta_1\>\Delta_2\>\dots\>\Delta_r$. 
Si le support $\sss$ n'est pas banal ou n'est pas connexe, 
la forme ordonnée de $\m$ peut ne pas exister ou ne pas être unique. 

On étend maintenant la relation $\>$ aux multisegments de 
support $\sss$.  

\begin{defi}
Soient $\m$ et $\m'$ des multisegments de support $\sss$, 
et soient $(\Delta^{}_1,\dots,\Delta^{}_r)$ et $(\Delta'_1,\dots,\Delta'_{r'})$
leurs formes ordonnées respectives. 
On écrit~: 
\begin{equation*}
\m>\m'
\end{equation*}
si l'on est dans l'un des cas suivants~:
\begin{enumerate}
\item
Il existe $1\<i<{\rm min}(r,r')$
tel que $\Delta_1^{}=\Delta'_1,\dots,\Delta_i^{}=\Delta'_i$ et 
$\Delta_{i+1}^{}>\Delta'_{i+1}$.
\item
On a $r>r'$ et 
$\Delta_1^{}=\Delta'_1,\dots,\Delta_{r'}^{}=\Delta'_{r'}$. 
\end{enumerate}
\end{defi}

On écrit aussi $\m\>\m'$ si $\m>\m'$ ou $\m=\m'$. 
La relation $\>$ est une relation d'ordre sur $\MS(\sss)$. 

\subsection{}

Soit $\sss=\sy{\rho_1}+\dots+\sy{\rho_r}$ un élément de $\Dive(\Cusp)$.  
La propriété suivante, prouvée dans \cite[\textsection 8.2]{MS}, montre l'importance de la 
notion de représentation banale.  

\begin{prop}
\label{propbanales}
Supposons que $\sss$ soit banal et que $r\>2$.  
Alors la représentation~:
\begin{equation*}
\rho_1 \times \dots \times \rho_r
\end{equation*}
ne contient aucun sous-quotient irréductible cuspidal.
\end{prop}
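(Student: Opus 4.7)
The plan is to proceed via the supercuspidal support, exploiting its uniqueness and additivity under parabolic induction. Suppose for contradiction that an irreducible cuspidal representation $\pi$ appears as a subquotient of $\rho_1\times\cdots\times\rho_r$. First, I would observe that the banality of the sum $\sum_i\sy{\rho_i}$ forces $e(\rho_i)\ge 2$ for each $i$, since if $e(\rho_i)=1$ then $\ZZ_{\rho_i}=\{\sy{\rho_i}\}$ would be entirely covered by the support of the sum, contradicting banality. In particular, each individual $\rho_i$ is banal.

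The key intermediate lemma, to be established independently, is that a banal irreducible cuspidal representation is supercuspidal. Granting this, each $\scusp(\rho_i)=\sy{\rho_i}$, and by the additivity of the supercuspidal support under parabolic induction (together with its invariance across subquotients of a given induced representation), one obtains
\begin{equation*}
\scusp(\pi)=\scusp(\rho_1)+\cdots+\scusp(\rho_r)=\sy{\rho_1}+\cdots+\sy{\rho_r}
\end{equation*}
as elements of $\Dive(\SCusp_\CR)$.

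I would then rule out $\pi$ in two cases. If $\pi$ is supercuspidal, then $\scusp(\pi)=\sy{\pi}$ is a single class counted once, while the right-hand side has $r\ge 2$ terms counted with multiplicity, an immediate contradiction. If $\pi$ is cuspidal but not supercuspidal, one invokes the structural theorem (due to Vign\'eras in the split case and extended in \cite{MS} to $\G_m$) stating that the supercuspidal support of such a representation must contain every class of some inertial line $\ZZ_\tau$ of a supercuspidal. But then $\sum_i\sy{\rho_i}$ contains the entire line $\ZZ_\tau$; since each $\rho_i$ is supercuspidal and \emph{a fortiori} cuspidal, this remains true in $\Dive(\Cusp_\CR)$, contradicting the banality hypothesis on $\sum_i\sy{\rho_i}$.

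The main obstacle lies in establishing the intermediate lemma that a banal cuspidal is supercuspidal, which is a genuinely substantial structural result requiring, in the modular setting, the theory of types as developed in \cite{MS}. Once this input is in place, along with the uniqueness of the supercuspidal support and Vign\'eras' classification of cuspidal non-supercuspidal representations, the two-case argument above is essentially formal.
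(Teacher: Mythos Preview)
The paper does not actually prove this proposition in the text; it is imported wholesale from \cite[\S8.2]{MS}. So there is no in-paper argument to compare against, only the reference.

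Your reconstruction is correct. The two structural inputs you isolate --- that a banal cuspidal is supercuspidal (\cite[Remarque~7.16]{MS}, quoted here as Example~(2) just after the definition of banal) and that the supercuspidal support of a cuspidal non-supercuspidal representation covers an entire line $\ZZ_\tau$ --- are precisely the type-theoretic facts established in \cite{MS} prior to \S8.2, so there is no circularity (in particular none with Corollary~\ref{egalitedesupports}, which is deduced \emph{from} the proposition). Once these are in hand, your two-case split via the supercuspidal support is the natural and essentially forced argument.

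One minor observation: your two inputs are really two faces of the same result. Both follow from the description in \cite{MS} of an arbitrary irreducible cuspidal as a representation of the form $\St(\tau,k)$ attached to a supercuspidal $\tau$; the case $k>1$ is exactly the cuspidal non-supercuspidal case, and its supercuspidal support then visibly covers $\ZZ_\tau$, while the contrapositive gives your intermediate lemma. So the argument can be streamlined to a single invocation of that classification, but the way you have organised it is perfectly sound.
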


Le résultat analogue dans le cas complexe est dû à \cite{BZ1}.

\begin{defi}
On dit qu'une représentation irréductible est \textit{banale} si son support 
cuspidal est banal.  
\end{defi}

\begin{exem}
\begin{enumerate}
\item
Une représentation irréductible cuspidale $\rho$ est banale si et seulement si 
$e(\rho)\>2$. 
\item 
Une représentation irréductible banale est cuspidale si 
et seulement si elle est super\-cuspidale 
(voir \cite[Remarque 7.16]{MS}). 
\item 
Si la caractéristique $\ell$ de $\CR$ est banale pour $\G_m$, 
\ie si $\ell$ est différente de $p$ et ne divise aucun des entiers 
$q^{di}-1$ pour $i\in\{1,\dots,m\}$, où $d$ désigne le degré réduit 
de $\D$ sur $\F$ et $q$ le cardinal du corps résiduel de $\F$, 
alors toute représentation irréductible de $\G_m$ est banale. 
Ceci est valable en particulier si $\R$ est de caractéristique nulle. 
\item
Inversement, on suppose que la caractéristique $\ell$ de $\CR$ est 
non nulle, et soit $m$ un entier tel que toute représentation irréductible 
de $\G_m$ soit banale. 
On note $e$ l'ordre de $q^d$ dans $\FF_\ell^\times$ et $1_{\mult\D}$ le 
caractère trivial de $\mult\D$. 
Si $e\<m$, alors la représentation $\St(1_{\mult\D},m)$ n'est pas banale 
(voir \cite[Proposition 7.8]{MS}). 
On en déduit que $e>m$, \ie que $\ell$ est banale pour $\G_m$. 
\end{enumerate}
\end{exem}

Le corollaire suivant est immédiat après la proposition \ref{propbanales}.

\begin{coro}
\label{egalitedesupports}
Soit $\pi$ une représentation irréductible banale.  
Alors le support cuspidal de $\pi$ est égal à son support supercuspidal.  
\end{coro}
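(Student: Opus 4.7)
My plan is to prove the equality by showing that every cuspidal representation appearing in $\cusp(\pi)$ is already supercuspidal; the equality of the two supports then follows since the supercuspidal support of $\pi$ is obtained from its cuspidal support by replacing each constituent $\rho_i$ by $\scusp(\rho_i)$, an expansion that is the identity exactly when each $\rho_i$ is itself supercuspidal.

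First, I would write $\cusp(\pi)=\sy{\rho_1}+\cdots+\sy{\rho_r}$ and deduce from the banality of this formal sum that each singleton $\sy{\rho_i}$ is banal as well. Indeed, specializing the definition of banality to $\rho=\rho_i$ produces an integer $k$ such that $\sy{\rho_i\nu_{\rho_i}^k}$ does not appear in $\{\sy{\rho_1},\dots,\sy{\rho_r}\}$; in particular it is distinct from $\sy{\rho_i}$, which forces $e(\rho_i)\>2$. Example~(1) of Section~\ref{sectionbanal} then identifies $\rho_i$ as a banal irreducible representation.

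Next, I would invoke Example~(2) of Section~\ref{sectionbanal}: a banal irreducible representation that is cuspidal must be supercuspidal. Applied to each $\rho_i$, this yields $\scusp(\rho_i)=\sy{\rho_i}$, and the desired equality $\scusp(\pi)=\cusp(\pi)$ follows by additivity of the supercuspidal support.

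The only substantive input is Example~(2), which is itself an immediate consequence of Proposition~\ref{propbanales}: if some $\rho_i$ were cuspidal but not supercuspidal, then $\rho_i$ would be a cuspidal irreducible subquotient of $\tau_1\times\cdots\times\tau_s$ with $s\>2$, where $\sy{\tau_1}+\cdots+\sy{\tau_s}=\scusp(\rho_i)$, contradicting the proposition. Since this step is already packaged into Example~(2), no further obstacle remains: the proof reduces to the short combinatorial verification above.
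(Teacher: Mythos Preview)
Your main argument is correct and is essentially what the paper intends: banality of $\cusp(\pi)$ forces $e(\rho_i)\>2$ for each cuspidal constituent $\rho_i$, whence each $\rho_i$ is supercuspidal by Example~(2), and the two supports agree by the additivity $\scusp(\pi)=\sum_i\scusp(\rho_i)$.

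One point in your last paragraph deserves correction. You claim that Example~(2) is itself an immediate consequence of Proposition~\ref{propbanales}: if $\rho_i$ were cuspidal but not supercuspidal, it would occur as a cuspidal subquotient of $\tau_1\times\cdots\times\tau_s$ with $s\>2$, contradicting the proposition. But Proposition~\ref{propbanales} has the hypothesis that $\sy{\tau_1}+\cdots+\sy{\tau_s}$ be \emph{banal}, and you have not verified this---indeed it is false in the relevant situation: the supercuspidal support of a cuspidal non-supercuspidal representation exhausts an entire orbit $\ZZ_\tau$, so is never banal. This is why the paper does not deduce Example~(2) from Proposition~\ref{propbanales} but instead cites \cite[Remarque~7.16]{MS}, where the explicit structure of cuspidal non-supercuspidal representations is used. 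Since you ultimately invoke Example~(2) as a black box, your proof stands; only the proposed justification of Example~(2) should be withdrawn.
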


\begin{rema}
La réciproque n'est pas vraie.  
Si l'image de $q$ dans $\mult\CR$ est d'ordre $2$, 
le caractère trivial de $\GL_2(\F)$ n'est pas banal
mais ses supports cuspidal et super\-cuspidal coïncident. 
\end{rema}

\subsection{}

Soit $\rho$ une représentation irréductible cuspidale de $\G_m$, 
et soit $\Delta= \left[a,b\right]_\rho$ un segment banal.
On pose~:
\begin{equation}
\Pi(\Delta)=\rho\nu_\rho^a\times\dots\times\rho\nu_\rho^b
\end{equation}
et on note $n$ la longueur de $\Delta$.
La proposition suivante associe à $\Delta$ deux représentations 
irréductibles de $\G_{mn}$.

\begin{prop} 
\label{segm}
\begin{enumerate}
\item 
L'induite $\Pi(\Delta)$ possède une unique sous-re\-pré\-sen\-ta\-tion 
ir\-ré\-ductible, notée $\Z(\Delta)$.  
C'est l'unique représentation irréductible de $\G_{mn}$, à
isomor\-phis\-me près, telle que~: 
\begin{equation*}
\rp_{(m,\dots,m)}\left(\Z(\Delta)\right)=\rho\nu_\rho^a\otimes\dots\otimes\rho\nu_\rho^b.
\end{equation*}
\item 
L'induite $\Pi(\Delta)$ possède un unique quotient ir\-ré\-ductible, noté 
$\L(\Delta)$.  
C'est l'unique représentation irréductible de $\G_{mn}$, à
isomorphisme près, telle que~: 
\begin{equation*}
\rp_{(m,\dots,m)}\left(\L(\Delta)\right)=\rho\nu_\rho^b\otimes\dots\otimes\rho\nu_\rho^a.
\end{equation*}
\end{enumerate}
\end{prop}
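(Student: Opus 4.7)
\medskip

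The plan is to deduce both assertions from Proposition \ref{mmm1} applied to $r=n=n(\Delta)=b-a+1$, the composition $\a=(m,\dots,m)$ ($n$ copies) and the representation
\begin{equation*}
\pi=\rho\nu_\rho^a\otimes\rho\nu_\rho^{a+1}\otimes\cdots\otimes\rho\nu_\rho^b
\end{equation*}
of the Levi $\M_\a$, so that $\ip_\a(\pi)=\Pi(\Delta)$. The first step is to check the hypothesis of Proposition \ref{mmm1}. For each $i$, the factor $\pi_i=\rho\nu_\rho^{a+i-1}$ is cuspidal of $\G_m$, hence $\rp_{\a_i}(\pi_i)$ vanishes for every non-trivial partition $\a_i$ of $m$; only the partitions concentrated in a single block contribute, and the tensor factor $\s^{(k)}_{i,2}$ is then either zero or isomorphic to $\pi_i$. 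The banality of $\Delta$ is equivalent to $n<e(\rho)$, and it guarantees that the classes $\sy{\rho\nu_\rho^{a+i-1}}$, for $i\in\{1,\dots,n\}$, are pairwise distinct in $\Cusp_\CR$. Consequently $\cusp(\pi_i)$ does not appear in $\sum_{j>i}\cusp(\pi_j)$, and the non-containment condition
\begin{equation*}
\cusp(\s^{(k)}_{i,2})\nleqslant\sum_{i<j\<n}\cusp(\pi_j)
\end{equation*}
is satisfied for every contributing composition factor.

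Proposition \ref{mmm1} now produces the desired objects: $\Pi(\Delta)=\ip_\a(\pi)$ has a unique irreducible subrepresentation, which we denote $\Z(\Delta)$, and a unique irreducible quotient, which we denote $\L(\Delta)$, both with multiplicity one in the corresponding Grothendieck class.

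For the Jacquet module characterization, I use the geometric lemma of paragraph \ref{lemmegeo}: for all $\pi_i$ cuspidal on the same $\G_m$ and pairwise non-isomorphic, the only matrices $\B$ that contribute to $\rp_\a\ip_\a(\pi)$ are the $m$-multiples of permutation matrices, and one obtains
\begin{equation*}
\sy{\rp_\a\ip_\a(\pi)}=\sum_{w\in S_n}\sy{\pi_{w(1)}\otimes\cdots\otimes\pi_{w(n)}},
\end{equation*}
a sum of $n!$ pairwise distinct irreducible classes by banality. Since $\rp_\a$ is exact, $\rp_\a(\Z(\Delta))$ is a subrepresentation of $\rp_\a\ip_\a(\pi)$, and $\rp_\a(\L(\Delta))$ is a quotient of it. Combining this with Frobenius reciprocity on one side and Bernstein's second adjointness on the other, and using the uniqueness of $\Z(\Delta)$ and $\L(\Delta)$ together with the multiplicity-one statement, one identifies these Jacquet modules as the identity and the longest-element terms respectively, namely $\rp_\a(\Z(\Delta))=\rho\nu_\rho^a\otimes\cdots\otimes\rho\nu_\rho^b$ and $\rp_\a(\L(\Delta))=\rho\nu_\rho^b\otimes\cdots\otimes\rho\nu_\rho^a$.

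Finally, to see that $\Z(\Delta)$ is the unique irreducible representation of $\G_{mn}$ whose $\rp_\a$ equals $\pi$, suppose $\tau$ is irreducible with $\rp_\a(\tau)=\pi$. By adjunction, a non-zero map $\tau\to\ip_\a(\pi)$ (or $\ip_\a(\pi)\to\tau$ in the dual case) exists, so $\tau$ embeds as a subrepresentation of $\Pi(\Delta)$, forcing $\tau\simeq\Z(\Delta)$ by uniqueness. The argument for $\L(\Delta)$ is symmetric. The main technical delicacy is the careful verification of the hypothesis of Proposition \ref{mmm1} for the boundary partitions of $m$, which is precisely where banality is essential; all remaining steps are adjunction computations and an application of the geometric lemma whose ingredients are already available in the paper.
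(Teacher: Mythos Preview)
Your reduction to Proposition \ref{mmm1} and your verification of its hypothesis are correct and coincide with the paper's approach; you are simply more explicit than the paper, which just observes that banality forces every element of $\supp(\Delta)$ to occur with multiplicity one.

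There is, however, a genuine gap in your identification of the Jacquet modules. Frobenius reciprocity gives a surjection $\rp_\a(\Z(\Delta))\twoheadrightarrow\pi$, and second adjointness (after unwinding the relation between $\rp_\a^-$ and $\rp_\a$ via the longest Weyl element) shows that $\rho\nu_\rho^b\otimes\cdots\otimes\rho\nu_\rho^a$ occurs in $\rp_\a(\L(\Delta))$. But nothing in your toolkit excludes \emph{further} constituents $\pi_w$: if some $\pi_w$ with $w\neq e$ appeared in $\rp_\a(\Z(\Delta))$, adjunction would merely tell you that $\Z(\Delta)$ is also the socle of $\ip_\a(\pi_w)$, which is not a contradiction. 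The multiplicity-one assertion of Proposition \ref{mmm1} controls the multiplicity of $\pi$ inside $\rp_\a\ip_\a(\pi)$, not the length of $\rp_\a(\Z(\Delta))$; and uniqueness of the socle and cosocle says nothing about the size of their Jacquet modules. Your final paragraph correctly proves uniqueness \emph{assuming} $\rp_\a(\Z(\Delta))=\pi$, but it does not establish that equality.

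The paper does not write out this step either, but it names the ingredient you omit: Proposition \ref{propbanales}, which guarantees that for $n\>2$ the induced representation $\Pi(\Delta)$ has no cuspidal subquotient. In the complex case this is automatic, but in the modular setting it is exactly the banality hypothesis that secures it. This is what makes the inductive argument of \cite[\S2]{Ze2} or \cite[Proposition 2.7]{Tadic} go through: one passes to $\rp_{(m,m(n-1))}$, uses the absence of cuspidal pieces to control the first tensor factor, and reduces to the shorter segment ${}^-\Delta$ (respectively $\Delta^-$). Your adjunction bookkeeping is an attempt to replace that induction, but it does not close the argument.
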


\begin{proof}
Puisque $\Delta$ est banal, tout élément de $\supp(\Delta)$ 
apparaît avec mul\-ti\-pli\-ci\-té $1$.  
D'après la proposition \ref{mmm1}, il existe une unique 
sous-re\-pré\-sentation ir\-ré\-duc\-tible et un unique quotient irréductible 
dans $\Pi(\Delta)$. 
D'après la proposition \ref{propbanales}, si le segment 
$\Delta$ est de longueur $\>2$, l'induite 
$\Pi(\Delta)$ ne contient pas de facteur cuspidal.  
La propriété sur le foncteur de Jacquet se prouve alors comme dans 
\cite[\textsection 2]{Ze2} ou \cite[Proposition 2.7]{Tadic}. 
Voir aussi \cite[Lemme 7.26]{MS}
\end{proof}

\begin{rema}
On a 
$\L\left(\Delta^{\vee}\right)\simeq \L \left( \Delta \right)^{\vee}$ et 
$\Z\left(\Delta^{\vee}\right)\simeq \Z \left( \Delta \right)^{\vee}$. 
\end{rema}

\section{Classification des représentations banales}
\label{sec}

Dans cette section on classifie toutes les représentations banales en termes 
de multisegments banals.  

\subsection{} 
\label{seccc} 

Soit $m\> 1$ un entier, soit $\rho$ une représentation irréductible 
cuspidale de $\G_m$ et soit $\Delta=\left[a,b \right]_\rho$ un segment.  
Pour montrer les théorèmes \ref{Z21} et \ref{Z22}, on notera indifféremment 
$\langle\Delta\rangle=\langle a,b\rangle_\rho$ la 
représentation $\Z(\left[a,b \right]_\rho)$ ou 
$\L(\left[-b,-a\right]_{{\rho}})$, comme dans \cite[\textsection 7.5.1]{MS}.

Dans le cas où $\langle\Delta\rangle$ désigne 
$\Z(\left[a,b \right]_\rho)$ (res\-pec\-ti\-ve\-ment
${\L(\left[-b,-a\right]_{{\rho}})}$) on note $\mu_\rho$ 
le carac\-tè\-re $\nu_\rho$ (respectivement $\nu_\rho^{-1}$). 

\medskip

La proposition suivante résume la proposition \ref{segm} 
et montre l'intérêt de cette notation.

\begin{prop}\label{segmbanal}
Soit $\left[a,b\right]_\rho$ un segment banal et soit $\pi$ une 
représentation ir\-ré\-duc\-ti\-ble. 
Les conditions suivantes sont équivalentes~: 
\begin{enumerate}
\item 
La représentation $\pi$ est isomorphe à $ \left<\Delta \right>$.
\item 
$\pi$ est l'unique sous-représentation irréductible de
$\rho \mu_{\rho}^a\times\rho\mu_{\rho}^{a+1}\times\dots\times\rho\mu_{\rho}^{b}$. 
\item 
On a 
$\rp_{(m,\dots,m)}\( \pi \)=\rho \mu_{\rho}^a\otimes\rho \mu_{\rho}^{a+1} 
\otimes \dots \otimes\rho \mu_{\rho}^{b}$. 
\end{enumerate}
\end{prop}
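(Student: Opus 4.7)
Le plan est de séparer les deux cas de la définition de $\langle\Delta\rangle$: celui où $\langle\Delta\rangle = \Z([a,b]_\rho)$ (avec $\mu_\rho=\nu_\rho$) et celui où $\langle\Delta\rangle = \L([-b,-a]_\rho)$ (avec $\mu_\rho=\nu_\rho^{-1}$). La stratégie, identique dans les deux cas, est d'obtenir (1) $\Leftrightarrow$ (3) par la caractérisation de $\langle\Delta\rangle$ via son module de Jacquet donnée dans la proposition \ref{segm}, puis (1) $\Rightarrow$ (2) par la réciprocité de Frobenius combinée à l'unicité d'une sous-représentation irréductible fournie par la proposition \ref{mmm1}; l'implication (2) $\Rightarrow$ (1) s'en déduit alors automatiquement, toute représentation satisfaisant (2) devant coïncider avec l'objet dont on a déjà établi qu'il est cette unique sous-représentation.

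Dans le premier cas, les trois conditions ne sont rien d'autre qu'une relecture verbatim de la proposition \ref{segm}(1) appliquée au segment $[a,b]_\rho$. Dans le second, le module de Jacquet de $\L([-b,-a]_\rho)$ calculé par la proposition \ref{segm}(2), à savoir $\rho\nu_\rho^{-a}\otimes\cdots\otimes\rho\nu_\rho^{-b}$, est précisément $\rho\mu_\rho^a\otimes\cdots\otimes\rho\mu_\rho^b$, ce qui fournit directement (1) $\Leftrightarrow$ (3). Pour (1) $\Rightarrow$ (2), je partirais de cette égalité et la réciprocité de Frobenius produit un morphisme non nul, donc injectif par irréductibilité, $\langle\Delta\rangle\hookrightarrow\rho\mu_\rho^a\times\cdots\times\rho\mu_\rho^b$. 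La proposition \ref{mmm1}, appliquée à $\alpha=(m,\ldots,m)$ et aux $\pi_i=\rho\mu_\rho^{a+i-1}$, s'applique sans peine: les $\pi_i$ étant cuspidales, les $\rp_{\alpha_i}(\pi_i)$ ne sont non nuls que pour les partitions triviales, pour lesquelles $\sigma_{i,2}$ vit sur $\G_0$ et a donc un support cuspidal non dominé par ceux des $\pi_j$ pour $j>i$ (ceux-ci vivant sur $\G_m$ avec $m\geq 1$). L'induite possède donc une unique sous-représentation irréductible, nécessairement $\langle\Delta\rangle$.

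La proposition \ref{segmbanal} est ainsi pour l'essentiel une reformulation de la proposition \ref{segm} dans la notation unifiée $\langle\Delta\rangle$: l'unique point qui demande une véritable justification, à savoir que $\L([-b,-a]_\rho)$ est l'unique sous-représentation irréductible de l'induite en ordre renversé $\rho\nu_\rho^{-a}\times\cdots\times\rho\nu_\rho^{-b}$ (et non pas l'unique quotient de l'induite dans l'ordre naturel $\rho\nu_\rho^{-b}\times\cdots\times\rho\nu_\rho^{-a}$, comme l'énonce la proposition \ref{segm}(2)), se ramène à la réciprocité de Frobenius et à l'unicité livrée par la proposition \ref{mmm1}. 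Je n'anticipe aucun obstacle véritable: le cœur de la démonstration est une tenue soigneuse du renversement d'ordre entre les deux induites ci-dessus, que le formalisme $\mu_\rho=\nu_\rho^{\pm 1}$ a précisément pour vocation de gérer de façon uniforme.
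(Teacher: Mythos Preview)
Your approach is correct and coincides with the paper's, which simply presents the proposition as a restatement of Proposition~\ref{segm} in the unified notation $\langle\Delta\rangle$, $\mu_\rho=\nu_\rho^{\pm1}$. One small slip in your verification of Proposition~\ref{mmm1}: the partition requiring attention is $(0,m)$, where $\sigma_{i,2}=\pi_i$ and banality of the segment ensures $\cusp(\pi_i)\nleqslant\sum_{j>i}\cusp(\pi_j)$; for the partition $(m,0)$ you describe, $\sigma_{i,2}$ is the trivial representation of $\G_0$, whose cuspidal support is $0$ and hence dominated by everything --- that case is tacitly excluded, as the paper's own use of Proposition~\ref{mmm1} in the proof of Proposition~\ref{segm} confirms.
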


Le théorème suivant est un cas particulier de \cite[Théorème 7.38]{MS}. 
On remarquera que, dans le cas banal, la preuve donnée dans \cite{MS} 
se simplifie notablement. 

\begin{theo}
\label{nuevo2}
Soient $\Delta_1,\dots,\Delta_r$ des segments banals. 
Si, pour tous $1\<i,j\<r$, les segments $\Delta_i$ 
et $\Delta_j$ sont non liés, alors la représentation~:
\begin{equation*}
\left< \Delta_1\right>\times \dots \times \left< \Delta_r\right>
\end{equation*} 
est irréductible. 
\end{theo}

\subsection{}

Le théorème suivant englobe les théorèmes \ref{Z21} et \ref{Z22}.

\begin{theo}
\label{Z2}
\begin{enumerate}
\item 
Soit $(\Delta_1,\dots,\Delta_r)$ une forme rangée d'un 
multisegment banal $\m$. 
Alors 
$\left<\Delta_1 \right>\times \dots \times \left<\Delta_r \right>$ 
admet une unique sous-représentation irréductible, ne dépendant que 
de $\m$ et notée~:
\begin{equation*}
\langle\m\rangle.
\end{equation*}
Elle est banale et sa multiplicité dans 
$\sy{\left<\Delta_1 \right>\times \left<\Delta_2 \right>
\times\dots \times \left<\Delta_r\right>}$ est égale à $1$. 
\item 
Soient $\m$ et $\m'$ des multisegments banals. 
Alors $ \left<\m\right>$ 
et $\left<\m'\right>$ sont isomorphes si et seulement 
si $\m=\m'$.
\item 
Soit $n\>1$. 
Toute représentation irréductible banale de $\G_n$ est de la 
forme $\left<\m\right>$, où $\m$ est un multisegment banal de degré $n$. 
\end{enumerate}
\end{theo}

\begin{rema}
\label{rem1}
Si $(\Delta'_1,\dots,\Delta'_r)$ est une autre forme rangée de 
$\m$, alors~: 
$$\left<\Delta_1 \right>\times \dots \times \left<\Delta_r \right> \simeq 
\left<\Delta'_1 \right>\times \dots \times \left<\Delta'_r \right>$$ 
d'après le théorème \ref{nuevo2}.
\end{rema}

Le reste de cette section sera consacré à la preuve de ce théorème.

\subsection{}

Dans ce paragraphe on prouve la partie (1) du théorème \ref{Z2}. 
Soit $\m$ un multisegment banal non nul.  
Il existe  $u\>1$ et des familles de multisegments 
$(\Delta_1^{(i)},\dots,\Delta_{n_i}^{(i)})$, $1 \< i \<u$, telles que~:
\begin{equation}
\label{numero}
(\Delta_1^{(1)},\dots,\Delta_{n_{1}}^{(1)},
\Delta_{1}^{(2)},\dots,\Delta_{n_{2}}^{(2)},\dots, 
\Delta_1^{(u)},\dots,\Delta_{n_{u}}^{(u)})
\end{equation}
soit une forme rangée de $\m$ 
et que, pour tous $1 \< i \< u$ et $1\< j,k \< n_i$, on ait~: 
\begin{equation}\label{primera}
b\Big(\Delta_{j}^{(i)}\Big)= b\Big (\Delta_{k}^{(i)}\Big),
\end{equation}
c'est-à-dire que $\Delta_{j}^{(i)}$ et $\Delta_{k}^{(i)}$ ont la même
extrémité finale, que l'on désigne par $\rho_i$.
Puisque le multisegment $\m$ est banal, on peut même supposer 
que~:
\begin{equation}
\label{segunda}
\rho_i\notin\Delta^{(j)}_k
\end{equation}
pour tous $1 \< i < j \< u$ et $1\< k \< n_j$
(\ie que, pour $1\< l \< n_i$ et $1\< k \< n_j$, 
le segment $ \Delta^{(i)}_l$ ne précède pas $\Delta^{(j)}_k$). 
On note $\a$ la partition~: 
$$\a=\big( 
\deg\big ( \Delta_1^{(1)}+\dots+\Delta_{n_{1}}^{(1)}\big), 
\deg\big ( \Delta_1^{(2)}+\dots+\Delta_{n_{2}}^{(2)}\big),
\dots, 
\deg\big ( \Delta_1^{(u)}+\dots+\Delta_{n_{u}}^{(u)}\big) 
\big) $$ 
et on pose~:
$$\pi_0 = \overset{u}{\underset{i=1}{\bigotimes}} 
\big(\langle\Delta_1^{(i)}\rangle 
\times\dots\times 
\langle\Delta_{n_{i}}^{(i)}\rangle\big)$$ 
qui est une représentation irréductible (par le théorème \ref{nuevo2}) 
de $\M_\a$.

\begin{lemm}
\label{unosolo} 
La représentation $\pi_0$ apparaît avec multiplicité $1$ dans~: 
\begin{equation*}
\rp_\a\big(
\langle\Delta_1^{(1)}\rangle\times \dots \times\langle\Delta_{n_{1}}^{(1)}\rangle
\times \dots \times 
\langle\Delta_1^{(u)}\rangle\times \dots \times\langle\Delta_{n_{u}}^{(u)}\rangle\big).
\end{equation*}
\end{lemm}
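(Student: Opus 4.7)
The plan is to reduce the lemma to Proposition \ref{mmm1}. By transitivity of parabolic induction, $\ip_{\a'}(\pi)=\ip_\a(\pi_0)$ when $\pi_0$ is viewed as a representation of the Levi $\M_\a$, so the assertion amounts to saying that $\pi_0$ has multiplicity one in $\sy{\rp_\a(\ip_\a(\pi_0))}$. Writing $\pi_i=\langle\Delta_1^{(i)}\rangle\times\cdots\times\langle\Delta_{n_i}^{(i)}\rangle$, one has $\pi_0=\pi_1\otimes\cdots\otimes\pi_u$, and each $\pi_i$ is irreducible by Theorem \ref{nuevo2}: by (\ref{primera}) all segments in block $i$ share the common final endpoint $\rho_i$, and two such segments are immediately seen to be unlinked.

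It therefore suffices to verify the hypothesis of Proposition \ref{mmm1}: for every $i\in\{1,\dots,u\}$, every non-trivial two-part partition $\a_i=(c_1,c_2)$ of $\deg(\pi_i)$, and every composition factor $\s^{(k)}_{i,1}\otimes\s^{(k)}_{i,2}$ of $\rp_{\a_i}(\pi_i)$, the inequality $\cusp(\s^{(k)}_{i,2})\nleqslant\sum_{i<j\leq u}\cusp(\pi_j)$ must hold. Condition (\ref{segunda}) gives $\rho_i\notin\cusp(\pi_j)$ for every $j>i$, so $\rho_i$ is absent from the right-hand side; it therefore suffices to show that $\rho_i$ occurs in $\cusp(\s^{(k)}_{i,2})$ for every such factor.

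To establish this I apply the geometric lemma of Section \ref{lemmegeo} to $\rp_{\a_i}(\pi_i)=\rp_{\a_i}(\ip_{\b_i}(\langle\Delta_1^{(i)}\rangle\otimes\cdots\otimes\langle\Delta_{n_i}^{(i)}\rangle))$, where $\b_i$ records the degrees of the individual segments in block $i$. Its composition factors correspond to matrices $\B_i\in\Mat^{\b_i,\a_i}$ together with choices of factors $\sigma_{l,1}\otimes\sigma_{l,2}$ of each two-part restriction $\rp_{(\beta_{l,1},\beta_{l,2})}(\langle\Delta_l^{(i)}\rangle)$. By Proposition \ref{segmbanal} combined with transitivity of $\rp$, this inner restriction is irreducible and decomposes the segment $\Delta_l^{(i)}$ into a prefix and a suffix; by (\ref{primera}) the common final endpoint $\rho_i$ lies in the suffix whenever the suffix is non-empty. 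Since $c_2>0$, the second column of $\B_i$ is non-zero and at least one row produces a non-empty suffix, so $\rho_i$ appears in the corresponding $\cusp(\sigma_{l,2})$ and hence in $\cusp(\s^{(k)}_{i,2})=\sum_l\cusp(\sigma_{l,2})$ by additivity of cuspidal support under parabolic induction.

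The main technical point is this prefix-suffix description of $\rp_{(\beta_1,\beta_2)}(\langle\Delta\rangle)$ with the final endpoint of $\Delta$ in the second tensor factor, which follows from the cuspidal support characterization of Proposition \ref{segmbanal} via transitivity of parabolic restriction. Once this is in place, Proposition \ref{mmm1} directly yields the desired multiplicity one.
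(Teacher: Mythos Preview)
Your proof is correct and follows essentially the same approach as the paper. The paper's own proof is a two-line invocation of Proposition~\ref{mmm1}, stating only that hypotheses \eqref{primera}, \eqref{segunda} and the geometric lemma together ensure that $\pi_0$ satisfies the conditions of that proposition; you have simply unpacked this verification, computing the two-part Jacquet modules of each $\langle\Delta_l^{(i)}\rangle$ via Proposition~\ref{segmbanal} and transitivity, and then using \eqref{primera}--\eqref{segunda} to locate the endpoint $\rho_i$ in the second tensor factor while excluding it from $\sum_{j>i}\cusp(\pi_j)$.
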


\begin{proof}
Par nos hypothèses \eqref{primera}, \eqref{segunda} et le lemme géométrique 
(voir le paragraphe \ref{lemmegeo}), 
la représentations $\pi_0$ satisfait aux conditions de la proposition
\ref{mmm1}.  
Le lemme est donc une conséquence directe de cette proposition. 
\end{proof}

\begin{prop}
\label{unosolos}
Avec les notations du lemme \ref{unosolo}, on a~:
\begin{enumerate}
\item 
La représentation~: 
$$\big\langle\Delta_1^{(1)}\big\rangle\times \dots
\times \big\langle\Delta_{n_{1}}^{(1)} \big\rangle\times \dots \times
\big\langle\Delta_1^{(u)}\big\rangle\times \dots \times
\big\langle\Delta_{n_{u}}^{(u)}\big\rangle$$ 
possède une unique sous-représentation
irréductible~; sa multiplicité dans l'induite est $1$. 
\item 
La représentation~: 
$$\big\langle\Delta_{n_u}^{(u)}\big\rangle\times \dots
\times \big\langle\Delta_{{1}}^{(u)} \big\rangle\times \dots \times
\big\langle\Delta_{n_1}^{(1)}\big\rangle\times \dots \times
\big\langle\Delta_{1}^{(1)}\big\rangle$$ 
possède un unique quotient irréductible
irréductible~; sa multiplicité dans l'induite est $1$. 
\end{enumerate}
\end{prop}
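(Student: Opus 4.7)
Le plan est d'appliquer la proposition \ref{mmm1} \`a la repr\'esentation
$\pi_0 = \tau_1 \otimes \dots \otimes \tau_u \in \Irr_\CR(\M_\a)$, o\`u l'on pose
$\tau_i = \langle\Delta_1^{(i)}\rangle \times \dots \times \langle\Delta_{n_i}^{(i)}\rangle$ pour chaque $i\in\{1,\dots,u\}$.
Le th\'eor\`eme \ref{nuevo2} entra\^{\i}ne que chaque $\tau_i$ est irr\'eductible, puisque les $\Delta_k^{(i)}$ partagent par \eqref{primera} la m\^eme extr\'emit\'e $\rho_i$ et sont donc deux \`a deux non li\'es.

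Je remarque ensuite que la v\'erification des hypoth\`eses de la proposition \ref{mmm1} pour $\pi_0$ est exactement celle d\'ej\`a effectu\'ee lors de la preuve du lemme \ref{unosolo}~: pour tout $i$, toute partition $\a_i$ du degr\'e de $\tau_i$ et tout facteur de composition $\sigma^{(k)}_{i,1}\otimes\sigma^{(k)}_{i,2}$ de $\rp_{\a_i}(\tau_i)$, le lemme g\'eom\'etrique combin\'e \`a la description des modules de Jacquet des $\langle\Delta_k^{(i)}\rangle$ fournie par la proposition \ref{segm} montre que le support cuspidal d'un $\sigma^{(k)}_{i,2}$ non trivial contient l'extr\'emit\'e commune $\rho_i$~; or l'hypoth\`ese \eqref{segunda} assure que $\rho_i$ n'appara\^{\i}t pas dans $\sum_{j>i}\cusp(\tau_j)$.

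La proposition \ref{mmm1} livre alors deux conclusions que je traduis dans les termes de l'\'enonc\'e. D'une part, $\ip_\a(\pi_0)$ admet une unique sous-repr\'esentation irr\'eductible, de multiplicit\'e $1$~; par transitivit\'e de l'induction parabolique, $\ip_\a(\pi_0)$ n'est autre que le produit $\langle\Delta_1^{(1)}\rangle\times\dots\times\langle\Delta_{n_u}^{(u)}\rangle$, ce qui \'etablit la partie~(1). D'autre part, la repr\'esentation $\tau_u\times\dots\times\tau_1$ admet un unique quotient irr\'eductible de multiplicit\'e $1$~; pour conclure la partie~(2), il me suffit d'appliquer une seconde fois le th\'eor\`eme \ref{nuevo2} afin d'identifier chaque $\tau_i$ \`a son renvers\'e $\langle\Delta_{n_i}^{(i)}\rangle\times\dots\times\langle\Delta_1^{(i)}\rangle$, la repr\'esentation de l'\'enonc\'e co\"{\i}ncidant alors avec $\tau_u\times\dots\times\tau_1$. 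L'obstacle principal, \`a savoir la v\'erification des hypoth\`eses de \ref{mmm1}, ayant d\'ej\`a \'et\'e absorb\'e par le lemme \ref{unosolo}, l'argument restant est essentiellement organisationnel et exploite uniquement la sym\'etrie entre sous-repr\'esentation et quotient dans la conclusion de \ref{mmm1}.
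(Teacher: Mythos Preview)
Your proof is correct and takes essentially the same approach as the paper, whose own proof consists of the single sentence ``Ceci se d\'eduit du lemme \ref{unosolo} et de la proposition \ref{mmm1}''. You have simply made the details explicit: that the hypotheses of proposition \ref{mmm1} for $\pi_0$ were already verified in the proof of lemma \ref{unosolo}, and that th\'eor\`eme \ref{nuevo2} (already invoked to see that $\pi_0$ is irreducible) lets one reorder the factors within each block so that $\ip_{\a^-}(\pi_0)\cong\tau_u\times\dots\times\tau_1$ coincides with the fully reversed product of part~(2).
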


\begin{proof}
Ceci se déduit du lemme \ref{unosolo} et de la proposition \ref{mmm1}. 
\end{proof}

Nous prouvons maintenant le point (1) du théorème \ref{Z2}. 
Soit $(\Delta_1,\dots,\Delta_r)$ une forme rangée d'un 
multisegment banal $\m$. 
La famille \eqref{numero} est une autre forme rangée de $\m$. 
Le résultat est une conséquence de la remarque \ref{rem1} et de 
la proposition \ref{unosolos}. 

\begin{rema}
\label{Z12}
On déduit de même, par la proposition \ref{unosolos} (2), que la 
représentation induite 
$\left<\Delta_r\right>\times \dots \times \left<\Delta_1 \right>$ 
possède un unique quotient irréductible.  
Le module de Jacquet de ce quotient contient la représentation $\pi_0$ 
du lemme \ref{unosolo} et il apparaît avec mul\-ti\-pli\-ci\-té $1$ dans 
l'induite. 
Puisque, par le lemme \ref{unosolo}, la représentation 
$\left<\m\right> $ est le seul sous-quotient de cette induite 
dont le module de Jacquet contienne $\pi_0$, on déduit que $\left<\m\right>$
\textit{est aussi le seul quotient irréductible de} 
$ \left<\Delta_r\right>\times \dots \times \left<\Delta_1\right>$. 
\end{rema}

\subsection{} 

Si $\m=\Delta_1+\dots+\Delta_r$ est un multisegment banal,
on note $\widetild{\m}$ le multisegment~: 
$$\widetild{\m}=\widetild{\Delta_1}+\dots+\widetild{\Delta_r}.$$ 
On a le résultat suivant. 

\begin{prop} 
La représentation $\left<\widetild{\m} \right>$ est isomorphe à 
$\left<\m\right>^\vee$. 
\end{prop}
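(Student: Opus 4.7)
The plan is to exhibit $\left<\widetild{\m}\right>$ and $\left<\m\right>^\vee$ as the same irreducible constituent of a single induced representation, using the multiplicity-one characterization underlying Lemme~\ref{unosolo} and Remarque~\ref{Z12}.

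Fix a forme rang\'ee $(\Delta_1,\ldots,\Delta_r)$ of $\m$. A direct check from la d\'efinition~\ref{lies} shows that $\Delta$ pr\'ec\`ede $\Delta'$ if and only if $\widetild{\Delta'}$ pr\'ec\`ede $\widetild{\Delta}$, so the dualized-and-reversed sequence $(\widetild{\Delta_r},\ldots,\widetild{\Delta_1})$ is a forme rang\'ee of $\widetild{\m}$. Th\'eor\`eme~\ref{Z2}(1) applied to $\widetild{\m}$ with this forme rang\'ee produces an embedding
\begin{equation*}
\left<\widetild{\m}\right>\hookrightarrow\left<\widetild{\Delta_r}\right>\times\cdots\times\left<\widetild{\Delta_1}\right>.
\end{equation*}

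Dually, Th\'eor\`eme~\ref{Z2}(1) applied to $\m$ itself gives $\left<\m\right>\hookrightarrow\left<\Delta_1\right>\times\cdots\times\left<\Delta_r\right>$. Taking contragr\'edients, and combining the factorwise identity $\left<\Delta\right>^\vee\simeq\left<\widetild{\Delta}\right>$ (the remark following Proposition~\ref{segm}) with the standard isomorphism $(\pi_1\times\cdots\times\pi_r)^\vee\simeq\pi_r^\vee\times\cdots\times\pi_1^\vee$ for normalized parabolic induction on $\GL_n$---obtained from the duality between inductions from opposite parabolics and conjugation by the long Weyl element of the Levi coset, which exchanges $\P_\a$ with its opposite while reversing the block order---one deduces a surjection
\begin{equation*}
\left<\widetild{\Delta_r}\right>\times\cdots\times\left<\widetild{\Delta_1}\right>\twoheadrightarrow\left<\m\right>^\vee.
\end{equation*}
Thus $\left<\widetild{\m}\right>$ and $\left<\m\right>^\vee$ are both irreducible subquotients of the same representation $\Pi=\left<\widetild{\Delta_r}\right>\times\cdots\times\left<\widetild{\Delta_1}\right>$.

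To conclude that they coincide, I appeal to the Jacquet-module characterization. Lemme~\ref{unosolo} applied to $\widetild{\m}$ produces an irreducible representation $\pi_0'$ of an appropriate Levi subgroup that occurs with multiplicity one in the Jacquet module $\rp_{\a'}(\Pi)$; as in Remarque~\ref{Z12}, this pins down $\left<\widetild{\m}\right>$ as the \emph{unique} irreducible subquotient of $\Pi$ whose Jacquet module contains $\pi_0'$. It therefore suffices to verify that $\left<\m\right>^\vee$ shares this property, which I obtain by dualizing the analogous statement $\pi_0\subset\rp_\a(\left<\m\right>)$ for $\m$, using the compatibility of the normalized Jacquet functor with contragredient together with the block-level translation $\left<\Delta\right>^\vee\simeq\left<\widetild{\Delta}\right>$ that converts $\pi_0$ into $\pi_0'$ after reversing the order of tensor factors. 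The main technical obstacle is precisely this last piece of bookkeeping---matching block by block the partition $\a$ and the constituent $\pi_0$ attached to the forme rang\'ee of $\m$ with their analogues $\a'$ and $\pi_0'$ for $\widetild{\m}$; the single-segment identity $\left<\Delta\right>^\vee\simeq\left<\widetild{\Delta}\right>$ serves as the base case ensuring that each factor in the tensor product transforms consistently.
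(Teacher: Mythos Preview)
Your argument contains a genuine error in the ``standard isomorphism''. For normalized parabolic induction from a standard parabolic one has
\[
(\pi_1\times\cdots\times\pi_r)^\vee\;\simeq\;\pi_1^\vee\times\cdots\times\pi_r^\vee
\]
with the \emph{same} order, not the reversed one: the contragredient commutes with $i_{P_\a}^G$ without changing $\a$. The justification you sketch (opposite parabolic plus conjugation by the long Weyl element) only gives $i_{P_\a}(\sigma)\simeq i_{\bar P_{\a^-}}(\sigma^{w})$, which is induction from an \emph{opposite} parabolic, not from $P_{\a^-}$; passing from $i_{\bar P}$ to $i_P$ is an equality in the Grothendieck group, not an isomorphism of representations. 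Consequently your displayed surjection $\Pi\twoheadrightarrow\langle\m\rangle^\vee$ with $\Pi=\langle\widetild{\Delta_r}\rangle\times\cdots\times\langle\widetild{\Delta_1}\rangle$ is not what follows from dualizing $\langle\m\rangle\hookrightarrow\langle\Delta_1\rangle\times\cdots\times\langle\Delta_r\rangle$.

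Once you use the correct relation, the whole Jacquet-module bookkeeping in your last paragraph becomes unnecessary, and you recover the paper's one-line argument. Indeed, Remarque~\ref{Z12} (which you already invoke) says that $\langle\m\rangle$ is the unique irreducible \emph{quotient} of $\langle\Delta_r\rangle\times\cdots\times\langle\Delta_1\rangle$; taking contragredients with the same-order rule gives that $\langle\m\rangle^\vee$ is the unique irreducible \emph{sub} of $\langle\widetild{\Delta_r}\rangle\times\cdots\times\langle\widetild{\Delta_1}\rangle=\Pi$. Your step~2 already shows $\langle\widetild{\m}\rangle$ is the unique irreducible sub of the same $\Pi$. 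Hence $\langle\m\rangle^\vee\simeq\langle\widetild{\m}\rangle$, and you are done. Your final paragraph is also shaky on its own terms: dualizing Jacquet modules uses Casselman's relation $\rp_\a(\pi^\vee)\simeq(\rp_\a^{-}\pi)^\vee$, which brings in the opposite parabolic, so the passage from $\pi_0\subset\rp_\a(\langle\m\rangle)$ to $\pi_0'\subset\rp_{\a'}(\langle\m\rangle^\vee)$ is not the direct block-by-block translation you describe.
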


\begin{proof}
On peut supposer que la famille $(\Delta_1,\dots,\Delta_r)$ est une forme rangée 
de $\m$. 
D'après la remarque \ref{Z12},  la représentation $\left<\m\right>$
est la seule sous-re\-pré\-sen\-ta\-tion irréductible de la représentation induite 
$\left<\Delta_1 \right>\times \dots \times \left<\Delta_r \right>$ et le seul 
quotient irréductible de 
$\left<\Delta_r \right>\times \dots \times\left<\Delta_1 \right>$. 
Par passage à la contragrédiente, 
on en déduit que $\widetild{ \left<\m \right>}$ est la seule sous-représentation 
irréductible de $\left<\widetild{\Delta}_r \right>\times \dots \times 
\left<\widetild{\Delta}_1 \right>$ et le seul quotient ir\-ré\-duc\-ti\-ble de 
$\left<\widetild{\Delta}_1 \right>\times \dots \times 
\left<\widetild{\Delta}_r \right> $. 
Comme $(\widetild{\Delta}_r,\dots,\widetild{\Delta}_1)$ est une forme rangée 
de $\m^\vee$, on a le résultat. 
\end{proof}

\begin{prop}
\label{despegados2}
Soient $\m_1, \dots, \m_t$ des multisegments banals tels que, si $i \neq j$,
aucun segment de $\m_i$ ne soit lié à un segment de $\m_j$.  
Alors~: 
\begin{equation*}
\left< \m_1\right> \times \dots \times \left< \m_t \right>\simeq 
\left< \m_1 + \dots+ \m_t \right>.
\end{equation*}
\end{prop}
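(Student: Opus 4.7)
The plan is to exhibit $\langle\m_1+\dots+\m_t\rangle$ as both the unique irreducible sub-representation and the unique irreducible quotient of $\pi:=\langle\m_1\rangle\times\dots\times\langle\m_t\rangle$, each with multiplicity one in $\sy{\pi}$, from which a length-one argument will yield the desired isomorphism. A preliminary task is to verify that $\m:=\m_1+\dots+\m_t$ is itself banal, so that $\langle\m\rangle$ is defined via Theorem~\ref{Z2}(1): for each cuspidal $\rho$ with $e(\rho)$ finite, the hypothesis translates into the statement that the arcs contributed by the various $\m_i$ on the cycle $\ZZ_\rho$ are pairwise either nested or separated by non-trivial gaps on both sides, and this constraint, combined with the individual banalities of the $\m_i$, prevents them from jointly covering $\ZZ_\rho$. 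Once banality of $\m$ is established, one picks an ordered form $(\Delta^{(i)}_1,\dots,\Delta^{(i)}_{n_i})$ of each $\m_i$; their concatenation is an ordered form of $\m$, since unlinked segments cannot precede one another, so the hypothesis ensures that no $\Delta^{(i)}_k$ precedes any $\Delta^{(j)}_l$ when $i<j$.

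Applying Theorem~\ref{Z2}(1) to this ordered form, $\langle\m\rangle$ is the unique irreducible sub-representation of the full induced representation $\Pi=\langle\Delta^{(1)}_1\rangle\times\dots\times\langle\Delta^{(t)}_{n_t}\rangle$, with multiplicity one in $\sy{\Pi}$. Since each $\langle\m_i\rangle$ embeds into $\langle\Delta^{(i)}_1\rangle\times\dots\times\langle\Delta^{(i)}_{n_i}\rangle$ by the same theorem, the representation $\pi$ embeds into $\Pi$, and so $\langle\m\rangle$ is also the unique irreducible sub-representation of $\pi$ and appears with multiplicity one in $\sy{\pi}$.

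The dual assertion then follows by passing to contragredients: one has $\pi^\vee\simeq\langle\widetild{\m_t}\rangle\times\dots\times\langle\widetild{\m_1}\rangle$, and since the contragredient operation preserves the linking relation, the family $(\widetild{\m_t},\dots,\widetild{\m_1})$ again satisfies the hypothesis of the proposition. The argument of the preceding paragraph, applied to $\pi^\vee$, shows that $\langle\widetild{\m}\rangle\simeq\langle\m\rangle^\vee$ is the unique irreducible sub-representation of $\pi^\vee$ with multiplicity one; dualizing once more yields that $\langle\m\rangle$ is the unique irreducible quotient of $\pi$, with multiplicity one in $\sy{\pi}$. Being simultaneously the socle and the cosocle of $\pi$, yet with total multiplicity one among its composition factors, $\langle\m\rangle$ must exhaust $\sy{\pi}$, so $\pi$ has length one and $\pi\simeq\langle\m\rangle$. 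The principal obstacle is the initial combinatorial verification of the banality of $\m$; once this is granted, the rest of the proof is essentially formal, relying only on Theorem~\ref{Z2}(1) and the contragredient identity of the previous proposition.
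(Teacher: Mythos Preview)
Your proof is correct and follows the same strategy as the paper: exhibit $\langle\m\rangle$ as both the unique irreducible subrepresentation and the unique irreducible quotient of $\pi$, with multiplicity one, and conclude that $\pi$ has length one. The one variation lies in the quotient step: you pass to contragredients via the identity $\langle\m\rangle^\vee\simeq\langle\widetild{\m}\rangle$, whereas the paper (after reducing to $t=2$) invokes Remark~\ref{Z12} directly to realize $\langle\m_1\rangle\times\langle\m_2\rangle$ as a quotient of $\langle\Delta_r\rangle\times\dots\times\langle\Delta_1\rangle\times\langle\Delta'_{r'}\rangle\times\dots\times\langle\Delta'_1\rangle$, whose unique irreducible quotient is $\langle\m_1+\m_2\rangle$. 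Both routes are equally short.

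One harmless slip: the contragredient of a normalized parabolically induced product preserves the order of the factors, so $\pi^\vee\simeq\langle\widetild{\m_1}\rangle\times\dots\times\langle\widetild{\m_t}\rangle$ rather than the reversed order you wrote; this is immaterial since the hypothesis is symmetric in the~$\m_i$. Your explicit flagging of the banality of $\m_1+\dots+\m_t$ is apt: the paper simply asserts it (with italics), and your arc-on-a-cycle description is the right picture, though a complete verification still requires a short inductive argument showing that a family of proper arcs on $\ZZ/e(\rho)\ZZ$ that are pairwise nested or separated by gaps cannot cover the whole cycle.
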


\begin{proof}
Par récurrence, on se ramène au cas où $t=2$.
Soient $(\Delta_1,\dots,\Delta_r)$ et 
$(\Delta'_1,\dots,\Delta'_{r'})$ des formes rangées 
de $\m_1$ et $\m_2$ respectivement.
Puisqu'aucun segment de $\m_1$ n'est lié à un segment de
$\m_2$, la famille $(\Delta^{}_1+\dots+\Delta^{}_r+\Delta'_1+\dots+\Delta'_{r'})$ 
est une forme rangée du multisegment \textit{banal} $\m_1+\m_2$. 
Ainsi la représentation~: 
\begin{equation}
\label{inv1}
\left<\Delta^{}_1\right>\times \dots \times \left<\Delta^{}_r\right> \times
\left<\Delta'_1\right> \times \dots \times \left<\Delta'_{r'} \right>
\end{equation}
a, par le théorème \ref{Z2}(1) une unique sous-représentation irréductible et elle
est isomorphe à la représentation $\left<\m_1+\m_2\right>$.  
Puisque $\left< \m_1\right> \times \left< \m_2 \right>$ est une 
sous-représentation de \eqref{inv1}, on en déduit que 
$\left<\m_1+\m_2\right>$ est l'unique sous-représentation irréductible de 
$\left<\m_1\right> \times \left< \m_2 \right>$. 
De même, d'après la remarque \ref{Z12}, la représentation~: 
\begin{equation}
\label{inv2}
\left<\Delta^{}_r\right>\times \dots \times\left<\Delta^{}_1\right> 
\times \left<\Delta'_{r'}\right> \times \dots \times\left<\Delta'_1 \right> 
\end{equation}
a un unique quotient irréductible isomorphe à $\left<\m_1+\m_2\right>$.
Comme $\left< \m_1\right> \times \left< \m_2 \right>$ est un quotient de
\eqref{inv2}, on trouve que 
$\left<\m_1+\m_2\right>$ est l'unique quotient irréductible de 
$\left<\m_1\right> \times \left< \m_2 \right>$. 
Or, d'après le théorème \ref{Z2}, la représentation 
$\left<\m_1+\m_2\right>$ apparaît avec
multiplicité $1$ dans \eqref{inv1}. 
On en déduit que $\left< \m_1\right> \times \left< \m_2 \right>$ est 
isomorphe à $\left<\m_1+\m_2\right>$.
\end{proof}

Ce résultat permet de nous ramener au cas des multisegments
banals de support fixé et connexe. 

\subsection{}
\label{connexe} 

Comme au paragraphe \ref{ConnBana}, 
soit $\sss \in \Dive(\Cusp)$ connexe et banal et écrivons~:
\begin{equation*}
\supp(\sss)=\sy{\rho}+\sy{\rho\mu_\rho}+\dots+\sy{\rho\mu_\rho^t}
\end{equation*}
où $\rho$ est une représentation irréductible cuspidale de $\G_m$
et $0\<t\<e(\rho)-1$. 
Soit un multisegment $\m\in\MS(\sss)$ 
et soit $(\Delta_1,\dots,\Delta_r)$ sa forme ordonnée. 
Pour chaque $i$, on écrit $\Delta_i= \left[a_i,b_i\right]_{\rho}$.
Alors, par le théorème \ref{Z2} (1), on peut déduire facilement 
(par réciprocité de Frobenius) que~: 
\begin{equation}
\label{pequena}
\rho \mu_\rho^{a_1}\otimes\dots\otimes\rho\mu_\rho^{b_1}\otimes
\rho\mu_\rho^{a_2}\otimes\dots\otimes\rho\mu_\rho^{b_2}\otimes 
\dots\otimes\rho\mu_\rho^{a_r}\otimes\dots\otimes\rho\mu_\rho^{b_r} 
\end{equation} 
apparaît dans le module de Jacquet
$\rp_{(m,\dots,m)}( \left<\Delta_1, \dots, \Delta_r\right>)$.

Si $\m'=\Delta'_1+\dots+\Delta'_{r'}$ est un multisegment de support $\sss$ 
tel que $\m'<\m$, alors, d'après le lemme géométrique, la représentation 
\eqref{pequena} n'apparaît pas dans le module de Jacquet 
$\rp_{(m,\dots,m)}(\left<\Delta'_1\right> \times \dots \times 
\left<\Delta'_{r'}\right>)$.
Par le théorème \ref{Z2}(1) et l'exactitude du foncteur de
Jacquet, elle n'apparaît pas dans 
$\rp_{(m,\dots,m)}( \left<\Delta'_1, \dots, \Delta'_{r'}\right>)$, 
la représentation $\left<\Delta'_1,\dots, \Delta'_r\right>$ étant 
une sous-représentation de $ \left<\Delta'_1\right>
\times \dots \times \left< \Delta'_{r'}\right>$. 
On en déduit que, si $\m,\m' \in \MS(\sss)$ et $\m \neq \m'$, 
alors $\left< \m \right>\not\simeq \left<\m' \right>$. 

\subsection{}
\label{preuvethe} 

On prouve dans ce paragraphe la partie (2) du théorème \ref{Z2}.
Soient $\m$ et $\m'$ deux multi\-segments banals. 
On décompose $\m$ sous la forme~:
\begin{equation*}
\m=\m_1+\m_2+\dots+\m_t
\end{equation*}
 avec $\m_i$ à support connexe pour tout $1 \< i \< t$ et 
$\m_i,\m_j$ à supports disjoints si $i\neq j$. 
De même, on écrit $\m'=\m'_1+\m'_2+\dots+\m'_{t'}$.
Supposons que $\m \neq \m'$ et montrons que les représentations 
$\left< \m \right>$ et $\left<\m'\right>$ ne sont pas isomorphes. 
D'après la proposition \ref{despegados2}, on peut supposer que
$\cC{\m_1^{}}= \cC{\m'_1}$ et que $\m^{}_1 > \m'_1$.  
Notons $\m_1=\Delta_1+\dots+\Delta_r$ avec 
$\Delta_i= \left[a_i,b_i\right]_{\rho}$ pour tout $i$, 
où $\rho$ est une représentation irré\-duc\-tible cuspidale de $\G_m$. 
De façon analogue, notons $\m'=\Delta'_1+\dots+\Delta'_{r'}$. 

D'après la partie (1) du théorème \ref{Z2}, il existe une représentation $\pi$
de la forme \eqref{pequena} et une repré\-sen\-ta\-tion $\pi' \in \G_{n'}$, 
avec $n'=\deg\(\m\)- \deg(\m_1)$, telles que la représentation 
$\pi \otimes \pi'$ 
apparaisse dans $\rp_{(m,m,\dots,m,n')}( \left<\m\right>)$.  
Comme dans le paragraphe \ref{connexe}, par 
le lemme géo\-métrique et l'hypothèse $\m^{}_1>\m'_1$, pour toute
représentation irréductible $\pi' \in \G_{n'}$, la représentation 
$\pi\otimes \pi'$ n'apparaît pas dans 
$\rp_{(m,m,\dots,m,n')}( \left<\Delta'_1\right> \times \dots \times \left<
 \Delta'_{r'}\right>)$.
D'après la partie (1) du théorème \ref{Z2} et par exac\-titude du foncteur de 
Jacquet, elle n'apparaît pas dans 
$\rp_{(m, m,\dots,m,n')}( \left<\m\right>)$. 
On en déduit que $\left< \m \right> \not\simeq \left< \m' \right>$.

\subsection{} 

Montrons maintenant la partie (3) du théorème \ref{Z2}. 
Soit $\pi$ un représentation banale et soit $\sss$ son support cuspidal.  
On note $\Irr(\sss)$ l'ensemble des représentations irréductibles de 
support cuspidal $\sss$ et $\MS(\sss)$ l'ensemble des multisegments de 
support $\sss$.  
Les ensembles $\Irr(\sss)$ et $\MS(\sss)$ sont finis et, d'après le théorème 
\ref{Z2} (2) qu'on vient de montrer, l'application $\m \mapsto \left< \m \right>$ 
est une injection de $\MS(\sss)$ vers $\Irr(\sss)$.  
On va prouver ici qu'elle est bijective. 

Pour cela, il suffit de montrer que les ensembles $\Irr(\sss)$ et 
$\MS(\sss)$ ont le même cardinal.  
On va prouver que l'application $\m \mapsto \Z(\m)$ est surjective. 
La preuve est la même que celle de \cite[6.7]{Ze2} dans le cas 
$\D=\F$ et $\CR=\CC$. 
Il faut d'abord étudier le cas de deux segments. 

\begin{lemm}
\label{2seglies}
Soit $\rho$ un représentation irréductible cuspidale de $\G_m$, 
et soient deux segments banals liés 
$ \left[ a,b \right]_\rho$ et $ \left[ a',b' \right]_{\rho}$ 
tels que le multisegment 
$ \left[ a,b \right]_\rho+\left[ a',b' \right]_{\rho} $ soit banal. 
\begin{enumerate}
\item
La représentation~:
\begin{equation}
\label{ProdDeuxSegBan}
\Z( \left[a,b \right]_\rho ) \times\Z( \left[a',b' \right]_\rho)
\end{equation}
est indécomposable de longueur $2$. 
\item 
On suppose que $\left[ a,b \right]_\rho$ précède $\left[ a',b' \right]_{\rho}$.
Alors l'unique sous-représentation irré\-duc\-ti\-ble de \eqref{ProdDeuxSegBan} 
est $\Z( \left[a,b' \right]_\rho) \times\Z( \left[a',b\right]_\rho)$ et son 
unique quotient irréductible est une sous-re\-présenta\-tion de 
$ \Z( \left[a',b' \right]_\rho) \times \Z( \left[a,b \right]_\rho)$.  
\end{enumerate}
\end{lemm}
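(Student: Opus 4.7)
My plan is as follows. Let $\Gamma=\left[a,b'\right]_\rho$ and $\Gamma'=\left[a',b\right]_\rho$ (with $\Gamma'$ empty and $\Z(\Gamma')$ the trivial representation of $\G_0$ when $a'=b+1$); both are well-defined because $\Delta$ precedes $\Delta'$ forces $a<a'\<b+1$ and $b<b'$. Since $\Gamma\supseteq\Gamma'$ as sets of indices, the two segments are unlinked, so by Theorem~\ref{nuevo2} and Theorem~\ref{Z2}(1) the representation $\Z(\Gamma)\times\Z(\Gamma')$ is irreducible, equal to $\langle\Gamma+\Gamma'\rangle$.

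I would first identify $\langle\Delta+\Delta'\rangle$ as a composition factor of $\Z(\Delta)\times\Z(\Delta')$ by applying Proposition~\ref{mmm1} to the \emph{arranged} pair $(\Z(\Delta'),\Z(\Delta))$. The hypothesis is satisfied: every composition factor of a two-block Jacquet module of $\Z(\Delta')$ has second component equal to some $\Z(\left[c,b'\right]_\rho)$, whose cuspidal support contains $\rho\nu_\rho^{b'}\notin\supp(\Delta)$ (as $b'>b$). Hence $\Z(\Delta')\times\Z(\Delta)$ has $\langle\Delta+\Delta'\rangle$ as unique irreducible subrepresentation with multiplicity one in its composition series, and by commutativity of the Grothendieck ring (section~\ref{GeEm}), the same holds in $\Z(\Delta)\times\Z(\Delta')$. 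Next, I would embed $\Z(\Gamma)\times\Z(\Gamma')$ into $\Z(\Delta)\times\Z(\Delta')$ by Frobenius reciprocity: the geometric lemma (section~\ref{lemmegeo}) applied to $\rp_{(mn,mn')}(\Z(\Gamma)\times\Z(\Gamma'))$ yields $\Z(\Delta)\otimes\Z(\Delta')$ as a subquotient, arising from the matrix that cuts $\Gamma$ into $\left[a,b\right]_\rho$ and $\left[b+1,b'\right]_\rho$ and pairs $\Gamma'$ with the first slot. The characterization of $\Z(\Delta)$ by its minimal Jacquet module (Proposition~\ref{segmbanal}) guarantees that $\Z(\Delta)$ appears as a composition factor of the resulting tensor product.

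Finally, I would bound the length of $\Z(\Delta)\times\Z(\Delta')$ by two through analysis of the full Jacquet module $\rp_{(m,\dots,m)}(\Z(\Delta)\times\Z(\Delta'))$, expressed via the geometric lemma as a Grothendieck-sum of shuffles of $(\rho\nu_\rho^a,\dots,\rho\nu_\rho^b)$ and $(\rho\nu_\rho^{a'},\dots,\rho\nu_\rho^{b'})$. Proceeding by induction on $n+n'$ (the case $n=n'=1$ being given directly by the definition of $\nu_\rho$, which forces $\rho\nu_\rho^a\times\rho\nu_\rho^{a+1}$ to have length two), every composition factor is forced to be isomorphic to $\langle\Delta+\Delta'\rangle$ or $\langle\Gamma+\Gamma'\rangle$ via the injectivity of $\m\mapsto\langle\m\rangle$ from Theorem~\ref{Z2}(2). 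Combined with the two distinct factors already exhibited, the length is exactly two: $\Z(\Gamma)\times\Z(\Gamma')$ is the unique irreducible subrepresentation and $\langle\Delta+\Delta'\rangle$ the unique irreducible quotient, and these being non-isomorphic, the representation is indecomposable. Part (2) then follows because $\langle\Delta+\Delta'\rangle$ is the unique irreducible subrepresentation of $\Z(\Delta')\times\Z(\Delta)$ by the first step.

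The main obstacle is the length bound. Proposition~\ref{mmm1} does not apply directly to the non-arranged pair $(\Delta,\Delta')$: the Jacquet module of $\Z(\Delta)$ cut at $c=a'$ has second-component support $\left[a',b\right]_\rho\subseteq\supp(\Delta')$, so the hypothesis fails. Overcoming this requires the inductive argument on $n+n'$ coupled with a careful enumeration of composition factors in the shuffle sum, or a Zelevinsky-style analysis of the canonical intertwining operator $\Z(\Delta)\times\Z(\Delta')\to\Z(\Delta')\times\Z(\Delta)$.
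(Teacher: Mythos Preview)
Your approach is genuinely different from the paper's. The paper does not attempt a direct combinatorial proof at all: in the split case $\D=\F$ it simply observes that Zelevinsky's argument for \cite[Proposition~4.6]{Ze2}, which rests on the theory of derivatives, goes through verbatim in the banal modular setting because the three ingredients it uses (derivatives, equality of cuspidal supports of all factors via Proposition~\ref{propbanales}, and the unlinked case via Theorem~\ref{nuevo2}) are all available; for general $\D$ it then transfers the result from the split case using the change-of-group method of \cite[\S5.4]{MS}. So the paper's proof is essentially a reduction to two external results, whereas you try to argue directly from Proposition~\ref{mmm1}, Theorem~\ref{Z2}(1)(2), and an induction.

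Your plan has two genuine gaps beyond the one you flag. First, in Step~2 your Frobenius reciprocity argument requires $\Z(\Delta)\otimes\Z(\Delta')$ to be a \emph{quotient} of $\rp_{(mn,mn')}(\Z(\Gamma)\times\Z(\Gamma'))$, not merely a subquotient; the geometric-lemma term you isolate has second component $\Z([b+1,b']_\rho)\times\Z([a',b]_\rho)$, which is reducible when $a'\le b$, so you have not exhibited $\Z(\Delta')$ directly and you must still argue it is a quotient of that piece and that the piece sits as a quotient in the filtration. Second, and more seriously, your Step~3 as written leans on the assertion that every composition factor of $\Z(\Delta)\times\Z(\Delta')$ is of the form $\langle\m\rangle$ for some $\m$: injectivity from Theorem~\ref{Z2}(2) only distinguishes representations already known to be of this form, and knowing that \emph{all} irreducibles with a given banal support are of this form is precisely Theorem~\ref{Z2}(3), whose proof in the paper uses Lemma~\ref{2seglies}. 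Your induction on $n+n'$ might circumvent this by pinning down each factor through a Jacquet-module characterisation (as in Proposition~\ref{segmbanal}) rather than by invoking the classification, but that argument is not supplied, and a bare shuffle count does not bound the length since several multisegments share the support of $\Delta+\Delta'$.

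If your direct argument could be completed it would have the merit of avoiding both the derivative theory and the change-of-group transfer, giving a uniform proof for all $\D$; but as it stands the length bound remains the crux, and the paper sidesteps it entirely by importing Zelevinsky's computation.
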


\begin{proof}
Le résultat est vrai dans le cas déployé, 
\ie lorsque $\D$ est égale à $\F$~: l'argument de Zelevinski 
\cite[Proposition 4.6]{Ze2} 
dans le cas complexe est encore valable. 
En effet, dans la preuve, on utilise trois outils qui sont valables 
dans notre cadre~: 
\begin{enumerate}
\item[$\bullet$] 
la théorie des dérivées~;
\item[$\bullet$] 
le fait que dans l'induite $\Z( \left[a,b \right]_\rho) 
\times\Z( \left[a',b' \right]_\rho)$ tous les facteurs ont le même 
support cuspidal (proposition \ref{egalitedesupports})~;
\item[$\bullet$] 
le cas où le supports de $ \left[ a,b \right]_\rho$ et $ \left[ a',b' 
\right]_{\rho}$ sont disjoints (voir \cite[\textsection 2]{Ze2}), 
qui s'étend ici comme on l'a vu dans le théorème \ref{nuevo2}. 
\end{enumerate}
Pour prouver le résultat dans le cas général, on applique la méthode 
du changement de groupe exposée dans \cite[\textsection 5.4]{MS}
(voir notamment \textit{ibid.}, corollaire 5.34).
\end{proof}
Pour les représentations à la Langlands, voir la remarque \ref{mover2rema}.
\begin{rema}
Si le multisegment $ \left[ a,b \right]_\rho+ \left[ a',b' \right]_{\rho} $ 
n'est pas banal, alors l'induite~:
\begin{equation*}
\Z( \left[a,b \right]_\rho ) 
\times \Z( \left[a',b' \right]_\rho )
\end{equation*}
peut être de longueur strictement supérieure à $2$. 
\end{rema}

Soit $\pi$ une représentation irréductible banale de $\G_m$,
et soit $\Phi(\pi)$ l'ensemble des familles de segments~:
\begin{equation}
\label{PhiF}
\overrightarrow{\m} =\(\Delta_1, \dots, \Delta_r\)
\end{equation}
tels que $\pi$ soit une sous-représentation de 
$\Z\left(\Delta_1\right) \times \dots \times
\Z\left( \Delta_r\right)$.  
Cet ensemble est non vide et fini.  
Appelons \textit{inversion} de \eqref{PhiF} 
un couple d'indices $(i,j)$ tels que $i<j$ et $\Delta_i$
précède $\Delta_j$. 
On va prouver qu'il existe un élément
$\overrightarrow{\m} \in\Phi(\pi)$ sans inversion. 
Soit $\overrightarrow{\m}$ dans $\Phi(\pi)$ avec un nombre 
d'inversions minimal, qu'on écrit sous la forme \eqref{PhiF}.
Pour chaque $i$, on pose $\Delta_i=[a_i,b_i]_\rho$.  
Supposons que $\overrightarrow{\m}$ ait une inversion. 
Par le corollaire
\ref{mover2} (1), on peut bien supposer que cette inversion est de la forme
$(i,i+1)$, c'est-à-dire que $\Delta_i$ précède $\Delta_{i+1}$. 
D'après la proposition \ref{2seglies}, l'induite $\Z\left(\Delta_i\right)
\times \Z\left( \Delta_{i+1}\right)$ est composée de la représentation
$\Z([a_{i+1},b_i]_\rho) \times \Z([a_i,b_{i+1}]_\rho)$ 
et d'une sous-représentation irréductible de
$\Z\left(\Delta_{i+1}\right) \times \Z\left( \Delta_{i}\right)$. 
Ainsi~: 
\begin{enumerate}
\item 
ou bien $\pi$ est une sous-représentation de 
$ \Z\left(\Delta_1\right)\times \dots \times 
\Z\left(\Delta_{i+1} \right) \times \Z\left(
\Delta_{i}\right) \times \dots \times \Z\left( \Delta_r\right)$, 
\item 
ou bien $\pi$ est un sous-représentation de~:
\begin{equation*}
\Z\left(\Delta_1\right)
 \times \dots \times \Z\left([a_{i+1},b_i]_\rho\right) \times
 \Z\left([a_i,b_{i+1}]_\rho\right)\times \dots \times \Z\left(
 \Delta_r\right)
\end{equation*} 
\end{enumerate}
et les deux familles 
$\left(\Delta_1, \dots ,\Delta_{i+1},\Delta_{i}, \dots , \Delta_r\right)$ et 
$\left(\Delta_1,\dots,[a_{i+1},b_i]_\rho,[a_i,b_{i+1}]_\rho,\dots ,
 \Delta_r\right)$ ont strictement moins d'inversions
que $\overrightarrow{\m}$ d'après \cite[Lemma 6.7]{Ze2}. 

Cela met fin à la preuve du théorème \ref{Z2} et donc aussi aux théorèmes \ref{Z21} et \ref{Z22}.

\begin{rema}
\label{mover2rema}
Si $ \left[ a,b \right]_\rho$ et $ \left[ a',b' 
\right]_{\rho}$ sont deux 
segments banals liés et tels que le multisegment 
$ \left[ a,b \right]_\rho+\left[ a',b' \right]_{\rho} $ soit également 
banal, alors $ \L( \left[a,b \right]_\rho ) 
\times \L( \left[a',b' \right]_\rho )$ est aussi de longueur $2$ 
indécomposable. 
Par exemple, la preuve de \cite[Proposition 4.3]{Tadic} est 
valable ici en remplaçant le théorème du quotient de Langlands par le théorème 
\ref{Z22}. 
\end{rema}

\begin{coro}
\label{mover2}
Soit $\Delta_1+\dots+\Delta_r$ un multisegment banal. 
Les conditions suivantes sont équivalentes~: 
\begin{enumerate}
\item Pour tous $1 \< i,j \< r$, les segments $\Delta_i$ et $\Delta_j$ ne
 sont pas liés. 
\item La représentation $\left< \Delta_1 \right> \times \dots \times \left<
 \Delta_r \right>$ est irréductible. 
\end{enumerate}
\end{coro}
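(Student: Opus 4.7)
Le plan est de traiter les deux implications séparément. L'implication (1) $\Rightarrow$ (2) est exactement l'énoncé du théorème \ref{nuevo2} et ne demande aucun travail supplémentaire.

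Pour l'implication réciproque, je raisonnerais par contraposée. Supposons qu'il existe au moins un couple $(i,j)$ avec $i<j$ tel que $\Delta_i$ et $\Delta_j$ soient liés, et choisissons un tel couple minimisant la différence $j-i$. Par cette minimalité, pour tout $k$ avec $i<k<j$, les segments $\Delta_k$ et $\Delta_j$ ne sont pas liés, sinon on disposerait d'un couple lié de différence strictement inférieure. Le théorème \ref{nuevo2} appliqué à $\{\Delta_k,\Delta_j\}$ entraîne alors que $\left<\Delta_k\right>\times\left<\Delta_j\right>$ et $\left<\Delta_j\right>\times\left<\Delta_k\right>$ sont toutes deux irréductibles; contenant chacune l'unique sous-représentation irréductible $\left<\Delta_k+\Delta_j\right>$ donnée par le théorème \ref{Z2}(1), elles lui sont égales et sont donc isomorphes. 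En itérant ces commutations élémentaires, je pourrais donc commuter $\Delta_j$ de proche en proche vers la gauche pour obtenir un isomorphisme entre l'induite de départ et une induite dans laquelle $\left<\Delta_i\right>$ et $\left<\Delta_j\right>$ apparaissent comme facteurs consécutifs.

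Le multisegment $\Delta_i+\Delta_j$ étant banal (tout sous-multisegment d'un multisegment banal l'est trivialement) et les segments $\Delta_i,\Delta_j$ étant liés, le lemme \ref{2seglies}(1) affirmerait alors que $\left<\Delta_i\right>\times\left<\Delta_j\right>$ est indécomposable de longueur $2$, d'où une suite exacte courte non scindée sur ce facteur avec deux termes irréductibles non nuls. Par induction en étages et par exactitude du foncteur d'induction parabolique, cette suite exacte se relèverait en une suite exacte analogue sur le produit complet, avec termes extrêmes non nuls puisque l'induction parabolique d'une représentation irréductible non nulle est non nulle. On en déduirait que le produit complet est réductible, ce qui contredit (2) et achève la démonstration.

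L'étape qui me semble la plus délicate est celle de la commutation: c'est la minimalité de $j-i$ qui garantit qu'aucun segment intermédiaire n'est lié à $\Delta_j$, condition indispensable pour appliquer le théorème \ref{nuevo2} à chaque commutation. Une fois obtenue la configuration où deux segments liés sont consécutifs, la propagation de la réductibilité par exactitude de l'induction parabolique est une manipulation standard.
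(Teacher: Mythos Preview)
Your argument is correct and follows the same approach as the paper's proof, which simply cites th\'eor\`eme~\ref{nuevo2}, lemme~\ref{2seglies}, and remarque~\ref{mover2rema}; you have merely made explicit the commutation step that brings the two linked segments into adjacent position, which the paper leaves implicit.

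One small point: you invoke only lemme~\ref{2seglies}(1) to conclude that $\langle\Delta_i\rangle\times\langle\Delta_j\rangle$ has length~$2$, but that lemma is stated for the $\Z$-case only. Since the notation $\langle\cdot\rangle$ covers both $\Z$ and $\L$ (cf.~\S\ref{seccc}), you should also cite remarque~\ref{mover2rema}, which supplies the analogous statement for $\L$. The paper's own proof does cite both. Apart from this omission, your proof is complete.

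As an aside, the commutation step can be shortened: since the Grothendieck ring $\Gg_\CR$ is commutative (\S\ref{GeEm}), one can reorder the factors freely at the level of semisimplifications, so that the length of $\langle\Delta_1\rangle\times\dots\times\langle\Delta_r\rangle$ equals that of any permuted product, in particular one where $\langle\Delta_i\rangle$ and $\langle\Delta_j\rangle$ are adjacent. Your approach via actual isomorphisms is equally valid and arguably more transparent.
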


\begin{proof}
Ce corollaire 
découle du théorème \ref{nuevo2}, du lemme \ref{2seglies} et de la
remarque \ref{mover2rema}.
\end{proof}

\subsection{}
\label{complexe}

On suppose dans ce paragraphe que $\CR$ est le corps des nombres complexes.  
Si $\D$ est égale à $\F$, les théorèmes \ref{Z21} et \ref{Z22} sont prouvés dans 
\cite[Theorem 6.1]{Ze2} et \cite[Théorème 3]{Rodier} respectivement.  
Dans le cas où $\D \neq \F$, le 
théorème \ref{Z22} est prouvé dans \cite[\textsection 2]{Tadic} lorsque la 
caractéristique de $\F$ est nulle et dans \cite{BHLS} lorsque la 
caractéristique de $\F$ est positive. Notre preuve est purement locale et ne 
s'appuie pas sur \cite{DKV}. Le théorème de classification \ref{Z21} est aussi 
nouveau dans ce cas.  
Les résultats qui précèdent fournissent aussi, \textit{a posteriori}, une
nouvelle preuve du résultat suivant, dû à 
\cite[B.2.d]{DKV} et \cite[Théorème 1.1]{Bad}. 

\begin{theo}
Si $\s$ et $\s'$ sont deux représentations irréductibles 
de carré inté\-gra\-ble, alors l'induite
$\s\times\s'$ est irréductible. 
\end{theo}

\begin{proof}
Comme dans le cas où $\D$ est égale à $\F$ (la preuve non publiée de Bern\-stein 
s'étend au cas où $\D\neq\F$, voir \cite{Auel} ou \cite{BHLS} pour plus de 
détails), il existe des segments centrés $\Delta$ 
et $\Delta'$ tels que $\s=\L(\Delta)$ et $\s'=\L(\Delta')$.  
Étant centrés, $\Delta$ et $\Delta'$ ne sont pas liés et le résultat découle 
maintenant du théorème \ref{nuevo2}. 
\end{proof}

\section{Relèvement d'une représentation banale} \label{rele}

Soit $\ell$ un nombre premier différent de $p$. 
On note $\qlb$ une clôture algébrique du corps $\mathbf{Q}_\ell$ 
des nombres $\ell$-adiques, $\zlb$ son anneau d'entiers 
et $\flb$ son corps résiduel. 
Dans cette section, on montre que toute $\flb$-représentation irréductible 
banale admet un relèvement à $\qlb$.  

\subsection{}
\label{DefRepEnt}

Soit un entier $m\>1$.
Une représentation de $\G_m$ sur un $\qlb$-espace vectoriel $\V$ est dite 
{\it entière} si elle est ad\-mis\-si\-ble et si elle admet une 
{\it structure entière}, \ie un sous-$\zlb$-module de $\V$ sta\-ble par $\G_m$ et
engendré par une base de $\V$ sur $\qlb$. 

Une $\qlb$-représentation cuspidale est entière si, et seulement si, son 
caractère central est à valeurs dans $\zlb$. Une $\qlb$-représentation 
irréductible est entière si, et seulement si, son support cuspidal est formé 
de $\qlb$-représentations cuspidales entières. 

Si $\pi$ est une représentation irréductible entière de $\G_m$ sur un 
$\qlb$-espace vec\-to\-riel $\V$, alors, d'après \cite[Theorem 1]{VigW}, 
toutes ses structures en\-tières sont de type fini comme $\zlb\G$-module.  
Si $\mathfrak{v}$ est une structure entière de $\pi$, 
la re\-pré\-sen\-ta\-tion de $\G_m$ sur le $\flb$-espace vectoriel 
$\mathfrak{v}\otimes\flb$ est de lon\-gueur finie et sa
semi-sim\-pli\-fi\-ca\-tion ne dépend pas du choix de la struc\-tu\-re
entière d'après \cite[II.5.11]{Vig1}.
On note $\r_{\ell}(\pi)$ cette semi-simplification, qu'on ap\-pel\-le 
{\it réduction} de $\pi$ et qui ne dépend 
que de sa classe d'iso\-mor\-phisme $\sy{\pi}$.
Par li\-néa\-rité, on en déduit un homo\-mor\-phis\-me de groupes~: 
\begin{equation}
\label{HomRes}
\r_{\ell}:\Gg_{\qlb}^{{\rm ent}}(\G_m)\to\Gg_{\flb}(\G_m),
\end{equation}
où $\Gg_{\qlb}^{{\rm ent}}(\G_m)$ désigne le sous-groupe de $\Gg_{\qlb}(\G_m)$
engendré par l'ensemble des clas\-ses d'iso\-mor\-phis\-me de 
$\qlb$-représen\-ta\-tions irréductibles entières de $\G_m$. 

On appelle \textit{relèvement} d'une $\flb$-représentation irréductible $\pi$ 
de $\G_m$ une $\qlb$-représen\-tation entière $\tilde\pi$ de $\G_m$ telle que 
$\sy{\pi} = \r_\ell(\tilde\pi)$. 
Si un tel relèvement existe, on dit que $\pi$ se relève. 

\subsection{} 

Soit un entier $m\>1$.
Le but de cette section est de montrer le théorème suivant~:

\begin{theo}
\label{releverbanales}
Soit $\pi$ une $\flb$-représentation irréductible \textit{banale} de 
$\G_m$. 
Alors $\pi$ admet un relèvement. 
\end{theo}

Remarquons que, pour les représentations cuspidales banales, le théorème 
\ref{releverbanales} est donné par la conjonction 
de \cite[Théorème 7.14]{MS}, qui implique qu'une représentation irréductible 
cus\-pi\-dale non supercuspidale n'est pas banale, et de \cite[Théorème 4.24]{MS}.  

\subsubsection{}

On étend d'abord les définitions des opérateurs d'entrelacement de
\cite[\textsection{5}]{Tadic} au cas des représentations banales.  
On reprend les notations du paragraphe \ref{seccc}.
Soient $\m$ un multisegment banal et 
$\( \Delta_1,\dots,\Delta_r \)$ une forme rangée de $\m$. 
On note 
$\I(\Delta_1, \dots , \Delta_r)$ la re\-pré\-sentation induite 
$\left< \Delta_1\right> \times\dots\times \left< \Delta_r \right>$. 

\begin{defi}
\label{opered}
On note~: 
\begin{equation*}
\Jj_\m: \I(\Delta_r, \dots , \Delta_1)
\to \I(\Delta_1, \dots , \Delta_r)
\end{equation*}
l'opérateur défini comme la composée~: 
\begin{equation*}
\I(\Delta_r, \dots , \Delta_1)
\iso{\a} \left< \Delta_1,\dots,\Delta_r \right> 
\iso{\b} \I(\Delta_1, \dots , \Delta_r)
\end{equation*}
où $\a$ et $\b$ sont respectivement la projection et l'inclusion définies par 
la remarque \ref{Z12} et le théorème \ref{Z2}.
\end{defi}

\begin{rema}
L'opérateur $\Jj_\m$ est un isomorphisme si et seulement si, 
pour tous entiers $1\<i,j\<r$, les segments $\Delta_i$ et $\Delta_j$ ne sont 
pas liés.  
\end{rema}

\begin{defi}
\label{opered2}
On note~: 
\begin{equation*}
\Jj'_\m: \I(\Delta_r, \dots , \Delta_1)
\to \I(\Delta_1, \dots , \Delta_r)
\end{equation*} 
l'opérateur défini comme la composée~:
\begin{equation*}
\label{ecua1} 
\begin{CD}
\I(\Delta_r,\dots,\Delta_3,\Delta_2,\Delta_1) 
@>{{\rm id}\times\dots\times{\rm id}\times\Jj_{\(\Delta_1,\Delta_2\)}}>>
& \I(\Delta_r,\dots,\Delta_3,\Delta_1,\Delta_2) \\ 
{} @>{{\rm id}\times\dots\times\Jj_{\(\Delta_1,\Delta_3\)}\times{\rm id}}>>
& \I(\Delta_r,\dots,\Delta_1,\Delta_3,\Delta_2) \\ 
& \vdots\\
{} @>{\Jj_{\(\Delta_1,\Delta_r\)}\times{\rm id}\times\dots\times{\rm id}}>>
& \I(\Delta_1,\Delta_r,\dots,\Delta_3,\Delta_2) \\
{} @>{{\rm id}\times\dots\times{\rm id}\times\Jj_{\(\Delta_2,\Delta_3\)}}>>
& \I(\Delta_1,\Delta_r,\dots,\Delta_2,\Delta_3) \\
& \vdots\\
{} @>{{\ }{\rm id}\times\dots\times\Jj_{\(\Delta_{r-1},\Delta_r\)}}>>
& \I(\Delta_1,\Delta_2,\Delta_3,\dots,\Delta_r)
\end{CD}
\end{equation*}
où, pour tous segments $\Delta,\Delta'$, l'opérateur 
$\Jj_{\(\Delta,\Delta'\)}$ est défini par la définition \ref{opered}. 
\end{defi}

\begin{lemm}\label{Tadmult}
Pour tout multisegment banal $\m$, on a $\Jj'_\m=\Jj_\m^{}$. 
\end{lemm}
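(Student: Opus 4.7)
The plan is to identify a one-dimensional $\Hom$ space in which both operators live, show that each is non-zero there, and then pin down the proportionality scalar.

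First, I would establish that
\begin{equation*}
\dim_\CR\Hom_{\G}\bigl(\I(\Delta_r,\dots,\Delta_1),\,\I(\Delta_1,\dots,\Delta_r)\bigr)\<1.
\end{equation*}
By Theorem~\ref{Z2}(1) the target admits $\langle\m\rangle$ as its unique irreducible subrepresentation, occurring with multiplicity $1$; dually, by Remark~\ref{Z12} the source admits $\langle\m\rangle$ as its unique irreducible quotient, again with multiplicity $1$. Any non-zero morphism must therefore have kernel containing the maximal proper subrepresentation of the source and image contained in the unique irreducible submodule of the target, so it factors through $\langle\m\rangle\to\langle\m\rangle$, i.e.\ through a scalar. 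In particular $\Jj_\m$, which is non-zero by construction, spans this Hom space.

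Second, I would prove by induction on $r$ that $\Jj'_\m$ is non-zero. For $r=2$ one has $\Jj'_\m=\Jj_\m$ directly from Definition~\ref{opered2}. For the inductive step, the definition of $\Jj'_\m$ splits as the composition of ``moving $\Delta_1$ to the front'' (a product of two-segment operators of the form $\Jj_{(\Delta_1,\Delta_i)}$) followed by $\id\times\Jj'_{\m'}$, where $\m'=\Delta_2+\dots+\Delta_r$; by induction the second factor is $\id\times\Jj_{\m'}$ and is non-zero. That the first factor is non-zero on the image reduces, via the geometric lemma (\textsection\ref{lemmegeo}) and Proposition~\ref{mmm1}, to tracking the canonical ``ordered'' component $\pi_0$ of the iterated Jacquet module, which is preserved at each swap by the very construction of each $\Jj_{(\Delta_1,\Delta_i)}$ via projection onto $\langle\Delta_1+\Delta_i\rangle$ and inclusion back. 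Combining these, $\Jj'_\m\ne 0$.

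Combining the two steps, $\Jj'_\m=c\,\Jj_\m$ for some scalar $c\in\mult\CR$. The last task is to compute $c=1$. For this I would test both operators on the canonical line in the iterated Jacquet module $\rp_{(m,\dots,m)}$ of the source corresponding to the cuspidal support of $\m$ taken in the order
\begin{equation*}
\rho\mu_\rho^{a_r}\otimes\dots\otimes\rho\mu_\rho^{b_r}\otimes\dots\otimes\rho\mu_\rho^{a_1}\otimes\dots\otimes\rho\mu_\rho^{b_1},
\end{equation*}
using Proposition~\ref{segmbanal}(3) together with the identification of $\langle\m\rangle$ given by the composition of projection and inclusion in Definition~\ref{opered}. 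Since each elementary $\Jj_{(\Delta_i,\Delta_j)}$ is itself defined by exactly this projection--inclusion recipe at the two-segment level (Lemma~\ref{2seglies}), the composition in Definition~\ref{opered2} induces the identity on the $\langle\m\rangle$-isotypic component of the Jacquet module, matching $\Jj_\m$ on the nose.

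The delicate point is this last verification: one must ensure that the many intermediate factorisations through irreducible subquotients $\langle\Delta_i+\Delta_j\rangle$ are mutually compatible in normalisation, so that no spurious scalar arises in the iterated composition. This compatibility ultimately rests on the multiplicity-one statement of Proposition~\ref{mmm1} applied at each stage, which rigidifies the choice of projection and inclusion up to a single global scalar, and that scalar is fixed to be $1$ by the two-segment base case where $\Jj'=\Jj$ tautologically.
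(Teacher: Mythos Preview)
Your plan is essentially Tadi\'c's argument, which is exactly what the paper invokes by reference. The one-dimensionality of the $\Hom$ space is correctly justified: since $\langle\m\rangle$ is the unique irreducible submodule of the target and the unique irreducible quotient of the source, each with multiplicity $1$, any nonzero map has image with simple head and simple socle both equal to $\langle\m\rangle$, hence image equal to $\langle\m\rangle$; thus every nonzero map is a scalar multiple of $\Jj_\m$.

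Two remarks. First, your step 3 is superfluous: the projections $\alpha$ and inclusions $\beta$ in Definitions \ref{opered} and \ref{opered2} are each canonical only up to a nonzero scalar, so $\Jj_\m$ and $\Jj'_\m$ are themselves only well-defined up to scalar, and the equality in the lemma should be read as proportionality---which is all that the application in Corollary \ref{red-con} requires, since only the image of $\Jj_\m$ matters there. The normalisation argument you sketch is therefore not needed, and as written it is not rigorous. Second, the genuine content is step 2, the nonvanishing of $\Jj'_\m$, and your sketch there is the place needing care. The inductive factorisation is correct, but at the intermediate stages the orderings of the $\Delta_i$ are not formes rang\'ees of $\m$, so Proposition \ref{mmm1} does not apply to them directly; one really has to follow a distinguished cuspidal component such as \eqref{pequena} through the iterated Jacquet modules and verify that it survives each elementary swap. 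This is doable and is indeed Tadi\'c's method, but it is the actual work hidden behind your phrase ``preserved at each swap''.
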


\begin{proof}
La preuve de \cite[Lemma 5.1]{Tadic} est encore valable ici. 
\end{proof}

\subsubsection{} 

Soit $\rho$ une $\flb$-représentation irréductible 
cuspidale de $\G_m$ et soit $\tilde\rho$
un relèvement de $\rho$.  
Pour tout segment banal $\Delta=\left[a,b\right]_{\rho}$, 
on pose $\tilde\Delta=\left[a,b \right]_{\tilde\rho}$.  

\begin{prop}
\label{red1seg} 
La $\qlb$-représentation $\langle\tilde\Delta\rangle$ est entière et on a~: 
\begin{equation*}
\r_{\ell}(\langle\tilde\Delta\rangle)= \left< \Delta \right> .
\end{equation*}
\end{prop}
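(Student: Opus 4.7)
Mon approche serait d'exhiber un relèvement explicite, à savoir $\langle\tilde\Delta\rangle$ elle-même, puis de l'identifier via la caractérisation de $\langle\Delta\rangle$ par son module de Jacquet donnée par la proposition \ref{segmbanal}(3).

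Je commencerais par établir l'intégralité de $\langle\tilde\Delta\rangle$. Le caractère $\nu$ étant non ramifié, il prend ses valeurs dans $\zlb^\times$, donc chaque tordue $\tilde\rho\nu^i$ est une $\qlb$-représentation cuspidale entière de réduction $\rho\nu^i$. L'induite parabolique $\tilde\rho\nu^a\times\dots\times\tilde\rho\nu^b$ est alors entière (on induit un $\zlb$-réseau stable), et $\langle\tilde\Delta\rangle$, sous-représentation de cette induite par la proposition \ref{segm}(1), est elle-même entière d'après un résultat classique de Vignéras (\cite{Vig1}).

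J'exploiterais ensuite la compatibilité de $\r_\ell$ avec l'induction parabolique et les foncteurs de Jacquet pour obtenir, dans le groupe de Grothendieck des représentations de $\M_{(m,\dots,m)}$, l'égalité
\[
\rp_{(m,\dots,m)}\bigl(\r_\ell\langle\tilde\Delta\rangle\bigr) \;=\; \r_\ell\,\rp_{(m,\dots,m)}\bigl(\langle\tilde\Delta\rangle\bigr) \;=\; \rho\nu^a\otimes\dots\otimes\rho\nu^b,
\]
la seconde égalité découlant de la proposition \ref{segmbanal}(3) appliquée au segment $\tilde\Delta$ en caractéristique nulle, où tout segment est banal. Écrivant alors $\r_\ell[\langle\tilde\Delta\rangle] = \sum_\pi n_\pi[\pi]$ dans $\Gg_{\flb}$ avec $n_\pi\in\NN$, chaque $\pi$ contribuant a pour support cuspidal $\rho\nu^a+\dots+\rho\nu^b$, banal par hypothèse, si bien que les $\rho\nu^i$ sont deux à deux non isomorphes. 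Par le lemme géométrique, $\rp_{(m,\dots,m)}(\pi)$ est une somme à coefficients positifs de tensorisations $\rho\nu^{\sigma(a)}\otimes\dots\otimes\rho\nu^{\sigma(b)}$, toutes distinctes entre elles. Comme l'égalité ci-dessus ne fait intervenir que la permutation identité avec multiplicité~$1$, seuls peuvent contribuer les $\pi$ dont le module de Jacquet vaut exactement $\rho\nu^a\otimes\dots\otimes\rho\nu^b$~; la proposition \ref{segmbanal}(3) force alors $\pi\simeq\langle\Delta\rangle$ et $n_{\langle\Delta\rangle}=1$.

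Le point le plus délicat me paraît être cette étape finale d'identification, où il faut écarter tout autre sous-quotient irréductible en exploitant la banalité de $\Delta$ (qui garantit l'unicité des termes fournis par le lemme géométrique) et la caractérisation forte de $\langle\Delta\rangle$ par son module de Jacquet~; les compatibilités invoquées aux étapes précédentes sont en revanche classiques.
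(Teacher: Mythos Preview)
Your approach is essentially the same as the paper's: establish integrality, use commutation of $\r_\ell$ with the Jacquet functor to compute $\rp_{(m,\dots,m)}$ of the reduction, then invoke proposition \ref{segmbanal} to identify the result as $\langle\Delta\rangle$. One step deserves more care: you assert without justification that every irreducible constituent $\pi$ of $\r_\ell(\langle\tilde\Delta\rangle)$ has cuspidal support $\supp(\Delta)$. In the modular setting this is not automatic from $\pi$ being a subquotient of $\rho\mu_\rho^a\times\dots\times\rho\mu_\rho^b$, since a subproduct could in principle admit a cuspidal non-supercuspidal subquotient, yielding a $\pi$ with different cuspidal support and $\rp_{(m,\dots,m)}(\pi)=0$; such a constituent would be invisible to your Jacquet-module computation. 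The paper closes this gap by citing proposition \ref{propbanales} (which rules out cuspidal subquotients under the banality hypothesis); once that is invoked, your argument and the paper's coincide.
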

\begin{proof}
La représentation $\langle\tilde\Delta\rangle$ est entière d'après le
paragraphe \ref{DefRepEnt} et la proposition \ref{segm}.  
Comme $\Delta$ est banal, $\r_{\ell}(\langle\tilde\Delta\rangle)$ ne
contient pas de représentation de support cuspidal différent du support de
$\Delta$ d'après la proposition \ref{propbanales}.
Puisque la réduction modulo $\ell$ commute au foncteur de Jacquet, 
$\r_{\ell}(\langle\tilde\Delta\rangle)$ est une $\flb$-représentation 
dont le module de Jacquet relativement à la partition $(m,\ldots,m)$ 
est isomorphe à~: 
$$\mu_{\rho}^a \rho  \otimes \mu_{\rho}^{a+1} \rho \otimes
\dots \otimes \mu_{\rho}^{b} \rho.$$ 
La proposition découle alors de la proposition \ref{segmbanal}.
\end{proof}

Dans le cas général (\ie pour un segment qui n'est pas nécessairement banal), 
voir \cite[\textsection 9.7]{MS}.

\begin{prop} 
\label{red2seg}
Soient $\Delta_1=\left[a,b\right]_{\rho}$ et 
$\Delta_2=\left[a',b'\right]_{\rho}$ deux 
segments banals liés avec $1\< a+1\<a'\<b+1\< b'\<e(\rho)-1$. 
Alors $\langle\tilde\Delta_1 , \tilde\Delta_2 \rangle $ est
entière et on a~: 
$$\r_{\ell}( \langle\tilde\Delta_1,\tilde\Delta_2 \rangle)=\langle \Delta_1,\Delta_2 \rangle .$$ 
\end{prop}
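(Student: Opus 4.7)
\medskip

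\noindent\emph{Esquisse de preuve.}
Le plan consiste à calculer $\r_\ell(\langle\tilde\Delta_1,\tilde\Delta_2\rangle)$
en étudiant les facteurs de composition de l'induite
$\tilde\pi := \langle\tilde\Delta_1\rangle \times \langle\tilde\Delta_2\rangle$
et de son analogue modulaire $\pi := \langle\Delta_1\rangle \times \langle\Delta_2\rangle$,
puis en les comparant dans le groupe de Grothendieck.
L'intégralité de $\langle\tilde\Delta_1,\tilde\Delta_2\rangle$ s'obtiendra immédiatement
à partir de la proposition \ref{red1seg}, qui fournit l'intégralité de chaque facteur
$\langle\tilde\Delta_i\rangle$, jointe au fait que tout sous-quotient irréductible d'une
induite parabolique de représentations entières l'est encore (via l'intégralité du support
cuspidal, voir \textsection\ref{DefRepEnt}).

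L'étape centrale est d'appliquer le lemme \ref{2seglies} à la fois à $\tilde\pi$ et à $\pi$.
Comme $\tilde\Delta_1$ précède $\tilde\Delta_2$ et que tout multisegment est automatiquement
banal en caractéristique nulle, ce lemme décompose $\tilde\pi$ en deux facteurs irréductibles~:
$\langle\tilde\Delta_1,\tilde\Delta_2\rangle$ d'une part, et
$\langle\tilde\mathfrak{m}'\rangle := \langle[a,b']_{\tilde\rho}\rangle \times
\langle[a',b]_{\tilde\rho}\rangle$ d'autre part,
les segments $[a,b']_{\tilde\rho}$ et $[a',b]_{\tilde\rho}$ n'étant pas liés, cette dernière
induite est irréductible par le théorème \ref{nuevo2}. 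L'hypothèse de banalité sur
$\Delta_1+\Delta_2$ permet d'appliquer le même lemme \ref{2seglies} à $\pi$, avec pour facteurs
$\langle\Delta_1,\Delta_2\rangle$ et
$\langle\mathfrak{m}'\rangle := \langle[a,b']_\rho\rangle \times \langle[a',b]_\rho\rangle$.

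Il restera alors à exploiter la compatibilité de $\r_\ell$ à l'induction parabolique
(voir \cite[II.5.11]{Vig1}) pour obtenir l'égalité $\r_\ell(\tilde\pi) = \pi$ dans $\Gg_{\flb}$,
puis la proposition \ref{red1seg} appliquée aux segments non liés
$[a,b']_\rho$ et $[a',b]_\rho$ pour identifier
$\r_\ell(\langle\tilde\mathfrak{m}'\rangle) = \langle\mathfrak{m}'\rangle$.
Par semi-simplification et soustraction dans $\Gg_{\flb}$, on trouvera ainsi
$\r_\ell(\langle\tilde\Delta_1,\tilde\Delta_2\rangle) = \langle\Delta_1,\Delta_2\rangle$,
d'où la conclusion. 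Le point délicat à soigner est la compatibilité de la réduction modulo $\ell$
à l'induction parabolique sur le groupe de Grothendieck des représentations entières,
c'est-à-dire l'existence de structures entières compatibles dans l'induite, ainsi que
l'identification précise de la place (sous-représentation ou quotient) de chaque facteur
de composition pour légitimer le passage à la semi-simplification.
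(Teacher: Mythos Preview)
Your proposal is correct and follows essentially the same approach as the paper: decompose both $\langle\tilde\Delta_1\rangle\times\langle\tilde\Delta_2\rangle$ and $\langle\Delta_1\rangle\times\langle\Delta_2\rangle$ via Lemma~\ref{2seglies}, use the compatibility of $\r_\ell$ with parabolic induction together with Proposition~\ref{red1seg} (and Theorem~\ref{nuevo2} for the irreducibility of the unlinked product), then subtract in the Grothendieck group. Your closing caveat about identifying the precise position of each composition factor is unnecessary, since the entire argument lives in $\Gg_{\flb}$ and only the multiset of factors matters.
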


Remarquons que la condition $1\< a+1\<a'\<b+1\< b'\<e(\rho)-1$
équivaut à dire que $\Delta_1$ précède $\Delta_2$ \textit{et que} 
$\tilde\Delta_1$ précède $\tilde\Delta_2$. 

\begin{proof}
La $\qlb$-représentation $ \langle\tilde\Delta_1 ,\tilde\Delta_2 \rangle $ est 
entière d'après le paragraphe \ref{DefRepEnt}.  
Comme le foncteur d'induction parabolique commute à la réduction, 
$\r_{\ell}( \langle\tilde\Delta_1 \rangle \times\langle\tilde\Delta_2 \rangle)$ 
est isomorphe à la semi-simplifiée de
$ \left< \Delta_1 \right> \times \left< \Delta_2 \right> $, 
c'est-à-dire, par la proposition \ref{2seglies}, à~: 
$$ \left< \Delta_1 , \Delta_2 \right> \oplus \( \left<a,b'\right>_\rho 
\times \left<a',b\right>_\rho \).$$ 
Puisque, d'un autre côté, par la proposition \ref{2seglies}, la représentation 
$ \langle\tilde\Delta_1 \rangle \times \langle\tilde\Delta_2\rangle $ est 
composée des représentations 
$ \langle\tilde\Delta_1 ,\tilde\Delta_2 \rangle$ et 
$\left<a,b'\right>_{\tilde\rho} \times \left<a',b\right>_{\tilde\rho}$, 
on a~: 
$$\r_{\ell}( \langle \tilde\Delta_1 \rangle \times \langle \tilde\Delta_2 \rangle)=
\r_{\ell}(  \langle  \tilde\Delta_1 ,\tilde\Delta_2 \rangle )+
\r_{\ell}( \left<a,b'\right>_{\tilde\rho} \times \left<a',b\right>_{\tilde\rho})$$
et, par la proposition \ref{red1seg} et le théorème \ref{nuevo2}~: 
$$\r_{\ell}(\left<a,b'\right>_{\tilde\rho} \times 
\left<a',b\right>_{\tilde\rho}) 
= \left<a,b'\right>_{\rho} \times \left<a',b\right>_{\rho}$$ 
et donc on trouve~: 
$$\r_{\ell}( \langle \tilde\Delta_1,\tilde\Delta_2 \rangle)= \left< \Delta_1,\Delta_2 \right>,$$
ce qui termine la démonstration.
\end{proof}

\subsubsection{} 

Voyons maintenant que l'opérateur $\Jj_\m$ défini à la définition 
\ref{opered} se réduit bien modulo $\ell$. 

\begin{prop}
\label{enti} 
Soient
 $\Delta_1=\left[a,b\right]_{\rho}$ et 
$\Delta_2=\left[a',b'\right]_{\rho}$ deux segments banals liés avec 
$1\< a+1\<a'\<b+1\< b'\<e(\rho)-1$.  
Notons $\ll_1$ et $\ll_2$ deux structures entières dans 
$ \langle \tilde\Delta_1 \rangle $ et $ \langle \tilde\Delta_2 \rangle $ 
respectivement. 
Alors il existe $\ll'_1$ et $\ll'_2$ deux structures entières dans 
$ \langle  \tilde\Delta_1  \rangle $ et $ \langle  \tilde\Delta_2  \rangle $ respectivement 
telles qu'on ait un diagramme commutatif~:
$$\xymatrix{
 \ll_2 \times \ll_1 \ar[r]^ {\Jj_{\(\tilde\Delta_1,\tilde\Delta_2\)}|_{\ll_2 
     \times \ll_1 }} \ar[d]_{\otimes \flb}& \ll'_1 \times \ll'_2 \ar[d]^{\otimes\flb} \\ 
  \left<  \Delta_2  \right> \times  \left<  \Delta_1 \right> \ar[r]_{ 
   \Jj_{\(\Delta_1,\Delta_2\)}} &  \left<  \Delta_1 
    \right> \times  \left<  \Delta_2 \right>  
}$$ 
où $\otimes\flb$ désigne le foncteur d'extension des scalaires de $\zlb$ à 
$\flb$. 
\end{prop}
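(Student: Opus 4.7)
L'id\'ee directrice est d'exploiter que l'op\'erateur $\Jj_{(\tilde\Delta_1,\tilde\Delta_2)}$ n'est d\'efini qu'\`a un scalaire de $\mult{\qlb}$ pr\`es. Je commencerais par v\'erifier que l'espace $\Hom_{\G}(\langle\tilde\Delta_2\rangle\times\langle\tilde\Delta_1\rangle,\langle\tilde\Delta_1\rangle\times\langle\tilde\Delta_2\rangle)$ est de dimension un sur $\qlb$~: la proposition \ref{2seglies} dit que ces deux induites sont toutes deux de longueur~$2$ et admettent la m\^eme paire de constituants irr\'eductibles, \`a savoir $\langle\tilde\Delta_1,\tilde\Delta_2\rangle$ et un autre~; le lemme de Schur, joint au fait que $\langle\tilde\Delta_1,\tilde\Delta_2\rangle$ appara\^\i t comme quotient d'un c\^ot\'e et sous-re\-pr\'e\-sen\-ta\-tion de l'autre (et pas l'inverse), suffit alors \`a conclure. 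La m\^eme analyse, appliqu\'ee aux segments modulaires $\Delta_1,\Delta_2$ qui restent li\'es et banals, donne la dimension un sur $\flb$.

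Ensuite, je fixerais des structures enti\`eres arbitraires $\mm_1$ et $\mm_2$ dans $\langle\tilde\Delta_1\rangle$ et $\langle\tilde\Delta_2\rangle$ respectivement. L'induction parabolique commutant \`a la r\'eduction modulo $\ell$, la structure enti\`ere induite $\mm_1\times\mm_2$ sur $\langle\tilde\Delta_1\rangle\times\langle\tilde\Delta_2\rangle$ se r\'eduit sur $\langle\Delta_1\rangle\times\langle\Delta_2\rangle$. L'image $\Jj_{(\tilde\Delta_1,\tilde\Delta_2)}(\ll_2\times\ll_1)$ est un sous-$\zlb\G$-module de type fini de $\langle\tilde\Delta_1\rangle\times\langle\tilde\Delta_2\rangle$~; quitte \`a multiplier $\Jj_{(\tilde\Delta_1,\tilde\Delta_2)}$ par une puissance convenable de $\ell$, on peut supposer qu'elle est contenue dans $\mm_1\times\mm_2$ mais pas dans $\ell\cdot(\mm_1\times\mm_2)$.

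La r\'eduction modulo $\ell$ de l'op\'erateur restreint $\ll_2\times\ll_1\to\mm_1\times\mm_2$ est alors un morphisme non nul $\langle\Delta_2\rangle\times\langle\Delta_1\rangle\to\langle\Delta_1\rangle\times\langle\Delta_2\rangle$, donc un multiple scalaire $\lambda\in\mult{\flb}$ de $\Jj_{(\Delta_1,\Delta_2)}$ d'apr\`es la dimension un \'etablie plus haut. En relevant $\lambda$ en une unit\'e $\tilde\lambda\in\mult{\zlb}$ et en rempla\c cant la normalisation de $\Jj_{(\tilde\Delta_1,\tilde\Delta_2)}$ par $\tilde\lambda^{-1}\cdot\Jj_{(\tilde\Delta_1,\tilde\Delta_2)}$ (ce qui ne modifie pas le $\zlb$-r\'eseau image, puisque $\tilde\lambda^{-1}$ est une unit\'e de $\zlb$), la r\'eduction devient exactement $\Jj_{(\Delta_1,\Delta_2)}$~; on pose alors $\ll_1'=\mm_1$ et $\ll_2'=\mm_2$, ce qui fournit le diagramme commutatif voulu.

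L'obstacle principal est d'articuler proprement les deux libert\'es de normalisation \`a notre disposition --- le scalaire de $\mult{\qlb}$ sur $\Jj_{(\tilde\Delta_1,\tilde\Delta_2)}$ et la puissance de $\ell$ contr\^olant la position relative de l'image par rapport \`a $\mm_1\times\mm_2$ --- pour obtenir non seulement la commutativit\'e \`a scalaire pr\`es, mais bien l'\'egalit\'e exacte des deux compositions. L'outil cl\'e pour cela est la dimension un de l'espace des entrelacements modulaires, laquelle repose sur la longueur~$2$ des deux induites (proposition \ref{2seglies}) et sur le fait que $\langle\Delta_1,\Delta_2\rangle$ appara\^\i t de mani\`ere ``crois\'ee'' (quotient ici, sous-repr\'esentation l\`a) et n'est donc pas simultan\'ement sous-repr\'esentation de la source et du but.
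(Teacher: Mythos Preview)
Your approach is correct in spirit and genuinely different from the paper's. The paper factors $\Jj_{(\tilde\Delta_1,\tilde\Delta_2)}=\tilde\beta\circ\tilde\alpha$ through the irreducible subquotient $\langle\tilde\Delta_1,\tilde\Delta_2\rangle$: the projection $\tilde\alpha$ sends $\ll_2\times\ll_1$ to an integral structure $\ll$ whose reduction is identified with $\langle\Delta_1,\Delta_2\rangle$ via Proposition~\ref{red2seg}; then, after descending to a finite extension $\E/\mathbf{Q}_\ell$ so as to work over the discrete valuation ring $\Oo_\E$, one scales a target lattice by a single element $a\in\E^\times$ so that $\tilde\beta(\ll)$ sits saturated inside it. Your argument bypasses Proposition~\ref{red2seg} entirely, replacing it by the observation that $\Hom(\langle\Delta_2\rangle\times\langle\Delta_1\rangle,\langle\Delta_1\rangle\times\langle\Delta_2\rangle)$ is one-dimensional over $\flb$, so that \emph{any} nonzero reduced intertwiner is automatically proportional to $\Jj_{(\Delta_1,\Delta_2)}$. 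This is more direct and conceptually cleaner; the paper's route, on the other hand, yields as a by-product an explicit integral structure on $\langle\tilde\Delta_1,\tilde\Delta_2\rangle$ and its reduction.

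One technical point needs repair. You write ``quitte \`a multiplier $\Jj_{(\tilde\Delta_1,\tilde\Delta_2)}$ par une puissance convenable de $\ell$''. Since $\zlb$ is \emph{not} a discrete valuation ring (its value group is $\QQ$), powers of $\ell$ do not suffice: one may well have $\ell^{k}\Jj(\ll_2\times\ll_1)\subseteq\mm_1\times\mm_2$ while the image still lies in $\mathfrak m\cdot(\mm_1\times\mm_2)$, giving a zero reduction. The fix is either to descend to a finite extension $\E$ as the paper does (so that a uniformiser of $\Oo_\E$ is available), or to argue directly that since $\Jj(\ll_2\times\ll_1)$ is finitely generated over $\zlb[\G]$ and $\mm_1\times\mm_2$ is $\G$-stable, there exists $c\in\mult\qlb$ with $c\cdot\Jj(\ll_2\times\ll_1)\subseteq\mm_1\times\mm_2$ and $c\cdot\Jj(\ll_2\times\ll_1)\not\subseteq\mathfrak m\cdot(\mm_1\times\mm_2)$. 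A second, cosmetic point: rather than renormalising $\Jj_{(\tilde\Delta_1,\tilde\Delta_2)}$ (which in the statement is a fixed operator), absorb the scalar $c\tilde\lambda^{-1}$ into the lattice by setting $\ll'_1=c^{-1}\tilde\lambda\,\mm_1$ and $\ll'_2=\mm_2$, using that $c^{-1}\tilde\lambda\,(\mm_1\times\mm_2)=(c^{-1}\tilde\lambda\,\mm_1)\times\mm_2$.
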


\begin{proof}
Notons $\a$ la projection de 
$\left< \Delta_2 \right> \times \left< \Delta_1 \right>$ sur 
$\left< \Delta_1 ,\Delta_2 \right> $
définie par la re\-mar\-que \ref{Z12}
et $\b$ le morphisme injectif de 
$ \left< \Delta_1 ,\Delta_2 \right>$ dans 
$\left< \Delta_1 \right> \times \left< \Delta_2 \right> $ 
défini par le théorème \ref{Z2}. 
On définit $\tilde{\a}$ et $\tilde{\b}$ de façon analogue.
D'après \cite[I.9.3]{Vig1}, l'image de $ \ll_2 \times \ll_1$ par $\tilde{\a}$ est une 
structure entière dans $ \langle \tilde\Delta_1 ,\tilde\Delta_2 \rangle$, 
notée $\ll$. 
D'après la proposition \ref{red2seg}, les $\flb$-représentations 
$\ll \otimes \flb$ et $ \left< \Delta_1 ,\Delta_2 \right>$ sont isomorphes. 
On note $m$ le degré de $\Delta_1+\Delta_2$ et on pose 
$\G=\G_m$. 

D'après \cite[II.4.7]{Vig1} (voir aussi la preuve du lemme 6.11 dans 
\cite{Dat2}), 
il existe une extension finie $\E/\mathbf{Q}_\ell$ telle que les 
$\qlb$-représentations $ \langle \tilde\Delta_1 \rangle$, 
$ \langle \tilde\Delta_2 \rangle$ et 
$\langle \tilde\Delta_1,\tilde\Delta_2 \rangle$
admettent des modèles sur $\E$, notés respectivement 
$\pi_1^{\E},\pi_2^{\E}$ et $\pi^{\E}$. 
On peut supposer que $\ll$ est de la forme $\zlb\otimes \mathfrak{v}'$ 
où $\mathfrak{v}'$ est une $\Oo_\E$-structure entière de $\pi^\E$. 
Soient $\mathfrak{v}_1$ et $\mathfrak{v}_2$ deux $\Oo_\E$-structures 
entières quelconques de $\pi_1^\E$ et $\pi_2^\E$ respectivement. 
Par \cite[I.9.3]{Vig1}, l'image réciproque 
$\tilde{\b}^{-1}(\mathfrak{v}_1\times\mathfrak{v}_2)$ est une 
$\Oo_\E$-structure entière $\mathfrak{v}$ de $\pi^\E$. 
Puisque $\mathfrak{v}'$ et $\mathfrak{v}$ sont de type fini en tant que 
$\Oo_\E\G$-modules, il existe une constante $a\in\E$ 
telle que 
$\mathfrak{v}'\subseteq a \mathfrak{v}$ et $\mathfrak{v}'\not\subseteq 
a\p_{\E} \mathfrak{v}$.  
Soient $\ll'_1= \zlb\otimes a\mathfrak{v}_1$ et 
$\ll'_2= \zlb\otimes a\mathfrak{v}_2$. 
Ce sont des structures entières dans $ \langle \tilde\Delta_1 \rangle $ et 
$ \langle \tilde\Delta_2 \rangle $ respectivement. 
Par construction, le diagramme~: 
$$\xymatrix{
\ll\ar[r]^-{\tilde{\b} |_{\ll }}\ar[d]_{\otimes \flb}&\ll'_1\times\ll'_2 
\ar[d]^{\otimes \flb}\\ 
\left<\Delta_1,\Delta_2\right>\ar[r]_-{\b}&\left< 
 \Delta_1 \right> \times \left< \Delta_2 \right>}$$ 
est commutatif, ce qui achève la preuve de la proposition. 
\end{proof}

\subsubsection{}
\label{cups} 

Soit $\sss \in \Dive(\Cusp_{\flb})$ un support connexe banal et supposons 
que~: 
\begin{equation*}
\supp(\sss)=\sy{\rho}+\sy{\rho\mu_{\rho}}+\dots+\sy{\rho\mu_{\rho}^t},
\quad
0\< t\< e(\rho)-1. 
\end{equation*}
Soient $\m \in \MS(\sss)$ un 
multisegment banal et $\( \Delta_1,\dots,\Delta_r \)$ une forme rangée de 
$\m$.  
Pour tout $1 \< i \< r$, on pose $\Delta_i= \left[a_i,b_i\right]_{\rho}$ avec 
$0 \< a_i \< b_i \< t$.  
Pour chaque entier $i$, on pose $\tilde\Delta_i= \left[a_i,b_i \right]_{\tilde\rho}$, 
puis on pose $\tilde\m=\tilde\Delta_1+\dots+\tilde\Delta_r$.  
La condition $0 \< a_i \< b_i \< t$ assure que,
pour tous $1 \< i,j \< r$, on a~:
$$\tilde\Delta_i \text{ précède } \tilde\Delta_j \text{ si et seulement si 
} \Delta_i \text{ précède } \Delta_j.$$ 
On en déduit le corollaire suivant. 

\begin{coro}
\label{red-con} 
Avec les notations précédentes, pour 
$1 \<i \< r$, soit $\ll_i$ une structure entière dans 
$\langle \tilde\Delta_{i}\rangle$.  
Alors il existe, pour tout $1 \< i \< r$, une structure entière $\ll'_i$ dans 
$\langle  \tilde\Delta_{i}  \rangle $ telle qu'on ait un diagramme commutatif~:
$$\xymatrix{ 
 \ll_r \times \ll_{r-1} \times \dots \times \ll_1 \ar[rrr]^ {\Jj_{\tilde\m}|_{ \ll_r 
     \times \ll_{r-1} \times \dots \times \ll_1 }} \ar[d]_{\otimes \flb}&&& \ll'_1 
 \times \dots \times \ll'_{r-1} \times \ll'_r \ar[d]^{\otimes \flb} \\ 
  \I\left(\Delta_r, \dots, \Delta_1\right)  \ar[rrr]_{\Jj_{\m} }& &&  \I\left(\Delta_1, \dots, \Delta_r \right)
}$$
où $\otimes\flb$ désigne le foncteur d'extension des scalaires de $\zlb$ à 
$\flb$. 
\end{coro}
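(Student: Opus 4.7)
The plan is to reduce the corollary to an iterative application of Proposition \ref{enti} by rewriting $\Jj_{\tilde\m}$ and $\Jj_\m$ as compositions of two-segment intertwiners. First, I would invoke Lemma \ref{Tadmult} to identify $\Jj_{\tilde\m}$ with $\Jj'_{\tilde\m}$ and $\Jj_\m$ with $\Jj'_\m$; by Definition \ref{opered2}, each of these operators is a finite composition of intertwiners of the shape $\id\times\dots\times\Jj_{(\tilde\Delta_i,\tilde\Delta_j)}\times\dots\times\id$ (respectively with $\Delta_i,\Delta_j$ in place of $\tilde\Delta_i,\tilde\Delta_j$). The hypothesis $0\< a_i\< b_i\< t$ guarantees, exactly as observed just before the statement of the corollary, that $\tilde\Delta_i$ pr\'ec\`ede $\tilde\Delta_j$ if and only if $\Delta_i$ pr\'ec\`ede $\Delta_j$, so the two compositions are structurally parallel: each pair of indices that gives a linked pair on one level also gives a linked pair on the other.

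Next, I would proceed by induction on the number of two-segment intertwiners appearing in the composition of Definition \ref{opered2}. At each step, two cases arise. If the pair $(\tilde\Delta_i,\tilde\Delta_j)$ currently being treated is linked, then Proposition \ref{enti} applied to the two relevant entire structures produces rescaled entire structures in $\langle\tilde\Delta_i\rangle$ and $\langle\tilde\Delta_j\rangle$ which fit into a commutative square with $\Jj_{(\Delta_i,\Delta_j)}$ after the functor $-\otimes\flb$ is applied. If the pair is not linked, Theorem \ref{nuevo2} ensures that $\Jj_{(\tilde\Delta_i,\tilde\Delta_j)}$ is an isomorphism between induced admissible representations of the same length, and a straightforward rescaling by a suitable power of a uniformizer of $\zlb$ produces an analogous commutative square. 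Stacking these local commutative squares vertically through the full composition then assembles the outer diagram required by the corollary.

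The main obstacle I expect is the bookkeeping of the successive rescalings. Each application of Proposition \ref{enti} replaces two of the current entire structures by scaled versions in the same representations, and one has to verify that after all the iterations the resulting family $\ll'_1,\dots,\ll'_r$ is still a well-defined collection of entire structures, one per $\langle\tilde\Delta_i\rangle$, making the outer diagram commute. This should work out because each individual step only modifies two of the factors and leaves the others untouched, and because $-\otimes\flb$ is exact and commutes with parabolic induction, so that the composition of reductions agrees with the reduction of the composition.
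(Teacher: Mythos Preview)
Your proposal is correct and follows essentially the same route as the paper: decompose $\Jj_{\tilde\m}$ and $\Jj_\m$ via Lemma~\ref{Tadmult} into compositions of two-segment operators, apply Proposition~\ref{enti} to each linked pair, and treat the non-linked pairs using the irreducibility from Theorem~\ref{nuevo2}. One minor technical point: $\zlb$ is not a discrete valuation ring, so there is no uniformizer to rescale by; as in the proof of Proposition~\ref{enti}, one should first descend to a finite extension $\E/\mathbf{Q}_\ell$ and rescale by an element of $\E^\times$.
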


\begin{proof}
D'après le lemme \ref{Tadmult} et la proposition \ref{enti}, il ne nous 
reste à voir que, si $ \Delta, \Delta' $ sont deux segments banals 
non liés et $\ll$ et $\ll'$ sont deux structures entières dans 
$ \langle  \tilde\Delta  \rangle $ et $ \langle \tilde\Delta^{\prime}  \rangle $ respectivement, alors 
le diagramme~: 
$$\xymatrix{ 
 \ll \times \ll' \ar[r]^ {\Jj_{\(\tilde\Delta,\tilde\Delta^{\prime}\)}|_{\ll \times \ll' 
   }} \ar[d]_{\otimes \flb}& \ll' \times \ll \ar[d]^{\otimes \flb} \\ 
  \left<  \Delta  \right> \times  \left<  \Delta'  \right> \ar[r]_{ 
   \Jj_{\(\Delta,\Delta'\)}} &  \left< \Delta' 
    \right> \times \left< \Delta \right> 
}$$
est toujours commutatif.  
Or, les flèches horizontales étant, par le théorème \ref{nuevo2}, des 
iso\-mor\-phismes, le résultat est clair.  
\end{proof}

\subsubsection{} 

On prouve maintenant le théorème \ref{releverbanales}.
Par la proposition \ref{despegados2}, on se ramène au cas où 
la représentation $\pi$ est de support cuspidal $\sss$ connexe. 
On peut maintenant appliquer le corollaire \ref{red-con}. 
Soit $\m \in \MS(\sss)$ un multisegment banal tel que $\pi=\langle\m\rangle$ 
et soit $\( \Delta_1,\dots,\Delta_r \)$ une forme rangée de $\m$. 
On définit $\tilde\m$ comme dans \ref{cups}. 
Pour $1 \< i \< r$, soient $\ll^{}_i, \ll'_i$ des struc\-tures entières dans 
$ \langle  \tilde\Delta_i \rangle $ telles qu'on ait un diagramme commutatif~: 
$$\xymatrix{ 
 \ll_r \times \ll_{r-1} \times \dots \times \ll_1 \ar[rrr]^ {\Jj_{\tilde\m}|_{ \ll_r 
     \times \ll_{r-1} \times \dots \times \ll_1 }} \ar[d]_{\otimes \flb}&&& \ll'_1 
 \times \dots \times \ll'_{r-1} \times \ll'_r \ar[d]^{\otimes \flb} \\ 
  \I\left(\Delta_r, \dots, \Delta_1\right)  \ar[rrr]_{\Jj_{\m} }& &&  \I\left(\Delta_1, \dots, \Delta_r \right) . 
}$$
On en déduit que~: 
$$\Jj_{\m} \( \( \ll_r \times \ll_{r-1} \times \dots \times \ll_1 \) \otimes 
\flb \) \simeq \Jj_{\m} \( \I\left(\Delta_r, \dots, \Delta_1\right) \) \simeq 
\pi.$$ 
D'un autre côté, l'image de $ \ll_r \times \ll_{r-1} \times \dots \times 
\ll_1$ par $\Jj_{\tilde{\m}}$ est une structure entière $\ll$ de la
représentation 
$\tilde\pi= \langle \tilde\m \rangle $ par le lemme \ref{Tadmult}, 
et donc la 
commutativité du diagramme implique que~: 
$$\ll \otimes \flb\simeq\pi,$$ 
c'est-à-dire $\r_{\ell}(\tilde\pi)=\sy{\pi}$, ce qui achève la preuve du théorème. 

\begin{rema}
Ainsi, pour relever une représentation irréductible banale $\pi$ 
de la forme $\langle\m\rangle$ pour un 
multisegment $\m=\Delta_1+\dots+\Delta_r $ 
en une représentation $\tilde\pi$ de la forme $\langle\tilde\m\rangle$ 
pour un multisegment 
$\tilde\m=\tilde\Delta_1+\dots + \tilde\Delta_r $, il suffit que les segments 
$\tilde\Delta_{i}$ soient des relèvements des seg\-ments $\Delta_i$ 
tels que, pour tous $1 \< i,j \< r$, on ait~: 
$$ \tilde\Delta_i \text{ précède } \tilde\Delta_j \text{ si et seulement si 
} \Delta_i \text{ précède } \Delta_j .$$ 
L'hypothèse de banalité sur $\pi$ nous assure qu'un tel choix des 
$\tilde\Delta_{i}$ est possible. 
\end{rema}

\providecommand{\bysame}{\leavevmode ---\ }
\providecommand{\og}{``}
\providecommand{\fg}{''}
\providecommand{\smfandname}{\&}
\providecommand{\smfedsname}{\'eds.}
\providecommand{\smfedname}{\'ed.}
\providecommand{\smfmastersthesisname}{M\'emoire}
\providecommand{\smfphdthesisname}{Th\`ese}

\end{document}